\newcounter{commentcounter}
\definecolor{SolutionColor}{rgb}{1,1,0.5}
\newcounter{commentscounter}
\theoremstyle{plain}
\newtheorem*{theorem*}{Theorem}
\newtheorem*{lemma*} {Lemma}
\newtheorem*{corollary*} {Corollary}
\newtheorem*{proposition*} {Proposition}
\newtheorem{theorem}{Theorem}[section]
\newtheorem{lemma}[theorem]{Lemma}
\newtheorem{corollary}[theorem]{Corollary}
\theoremstyle{remark}
\newtheorem*{remark}{Remark}
\newtheorem*{definition}{Definition}
\newtheorem{example}[theorem]{Example}
\theoremstyle{definition}
\def \Z {\Bbb{Z}}
\def \C {\Bbb{C}}
\def\op{\operatorname}
\def\wti{\widetilde}
\def\sl{\op{SL}}
\def\op{\operatorname}
\def\gl{\op{GL}}
\def\Z{\Bbb{Z}}
\def\C{\Bbb{C}}
\def\N{\Bbb{N}}
\def\part{\partial}
\def\a{\alpha}
\def\bp{\begin{pmatrix}}
\def\sm{\setminus}
\def\ep{\end{pmatrix}}
\def\bn{\begin{enumerate}}
\def\en{\end{enumerate}}
\def\ba{\begin{array}}
\def\ea{\end{array}}
\def\a{\alpha}
\def\ti{\tilde}
\def\fr12{\frac{1}{2}}
\def\be{\begin{equation} }
 \def\ee{\end{equation}}
\def\co{\colon\,}
\def\cmtbf#1{} \def\cmt#1{}
\def\op{\operatorname}
\def\tihol{\wti{\op{Hol}}}
\DeclareMathOperator{\SL}{SL}
\DeclareMathOperator{\GL}{GL}
\DeclareMathOperator{\ind}{ind}
\DeclareMathOperator{\res}{res}
\begin{document}

\title{On high-dimensional representations of knot groups}

\author{Stefan Friedl}
\address{Fakult\"at f\"ur Mathematik\\ Universit\"at Regensburg\\93040 Regensburg\\   Germany}
\email{sfriedl@gmail.com}

\author{Michael Heusener}
\address{
Universit\'e Clermont Auvergne, Universit\'e Blaise Pascal, Laboratoire de Math\`ematiques, 
BP 10448, F-63000 Clermont-Ferrand \\ CNRS, UMR 6620, LM, F-63178 Aubiere, France}
\email{Michael.Heusener@math.univ-bpclermont.fr} 
\date{\today}

\begin{abstract}
Given a hyperbolic knot $K$ and any $n\geq 2$ the abelian representations and the holonomy representation each give rise to an $(n-1)$-dimensional component in the $\SL(n,\C)$-character variety.
A component of the $\SL(n,\C)$-character variety of dimension $\geq n$ is called high-dimensional.

It was proved by Cooper and Long that there exist  hyperbolic 
knots with high-dimensional components in the $\SL(2,\C)$-character variety. We show that given any non-trivial knot $K$ and sufficiently large $n$ the $\SL(n,\C)$-character variety of $K$ admits high-dimensional components.
 \end{abstract}

\maketitle


\section{Introduction}

Given a knot $K\subset S^3$ we denote by $E_K=S^3\sm \nu K$ the knot exterior and we write 
$\pi_K=\pi_1(E_K)$. Furthermore, given a group $G$ and $n\in \N$ we denote by $X(G,\sl(n,\C))$ the $\sl(n,\C)$-character variety. We recall the precise definition in Section~\ref{section:definition-character-variety}.
It is straightforward to see that the abelian representations of a knot group $\pi_K$ give rise to an $(n-1)$-dimensional subvariety of $X(\pi_K,\sl(n,\C))$ consisting solely of characters of abelian representations (see \cite[Sec.~2]{HMP15}).

If $K$ is hyperbolic, then we denote by $\tihol\co \pi_K\to \sl(2,\C)$ a lift of the holonomy representation. For $n\geq 2$ we denote by 
$\zeta_n\co \sl(2,\C)\to \sl(n,\C)$ the, up to conjugation, 
 unique rational irreducible representation of $\sl(2,\C)$.
Menal-Ferrer and Porti \cite{MFP12a,MFP12b} showed that for any $n$ the 
representation $\rho_n := \zeta_n\circ\tihol$ is a smooth point of the $\SL(n,\C)$-representation 
variety $R(\pi_k,\sl(n,\C))$. Moreover, Menal-Ferrer and Porti proved that the corresponding character $\chi_{\rho_n}$ is a smooth point on the character variety $X(\pi_K,\sl(n,\C))$, it is contained in a unique component of dimension $n-1$ \cite[Theorem~0.4]{MFP12a}. Also, the deformations of reducible representations studied in \cite{HM14} and \cite{BenAH15} give rise to $(n-1)$-dimensional components in the character variety $X(\pi_K,\sl(n,\C))$.

The above discussion shows that given any knot $K$ the character variety $X(\pi_K,\sl(n,\C))$ contains an $(n-1)$-dimensional subvariety consisting of abelian representations and if $K$ is hyperbolic $X(\pi_K,\sl(n,\C))$ also contains an $(n-1)$-dimensional subvariety that contains characters of irreducible representations.

This motivates the following definition. Given a knot $K$ we say that a component of  $X(\pi_K,\sl(n,\C))$ is \emph{high-dimensional} if its dimension is  greater than $n-1$. We summarize some known facts about the existence and non-existence of high-dimensional components of character varieties of knot groups.
\begin{itemize} 
\item For $n=3$ and $K$ a non-alternating torus knot the variety $X(\pi_K,\sl(3,\C))$ has $3$-dimensional components, whereas  for alternating torus knots  $X(\pi_K,\sl(3,\C))$ has only 
$2$-dimensional components. In particular for alternating torus knots $X(\pi_K,\sl(3,\C))$ does not contain any high-dimensional components. For more details see \cite[Thm.~1.1]{MunPor16}.
\item For $n=3$ and $K=4_1$ the variety $X(\pi_K,\sl(3,\C))$ has five $2$-dimensional components.
Three of the five components contain characters of irreducible representations. There are no higher-dimensional components. (See \cite[Thm.~1.2]{HMP15}.)
\item
It was proved by Cooper and Long \cite[Sec.~8]{CL96} that for a given $n$ there exists an alternating hyperbolic knot $K_n$ in $S^3$ such that the $\SL(2,\C)$-character variety admits a component of dimension at least $n$. 
\end{itemize}
The main result of this note is to prove that the $\SL(n,\C)$-character variety of every non-trivial knot admits high-dimensional components for $n$ sufficiently large.

\begin{theorem}\label{thm:main}
Let $K\subset S^3$ be a non-trivial
knot. Then for all $N\in\N$ there exists an $n\geq N$ such that 
the character variety $X(\pi_K,\sl(n,\C))$ contains a high-dimensional component.
\end{theorem}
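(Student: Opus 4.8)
The plan is to build the required high-dimensional component not from an irreducible representation but from a \emph{reducible} one, by composing a fixed $\SL(2,\C)$-representation with a reducible rational representation of $\SL(2,\C)$ and thereby ``inflating'' the contribution of the peripheral torus. I first treat the case that $K$ is hyperbolic, where $\tihol$ and the Menal-Ferrer--Porti machinery from the introduction are available. Fix a partition $n=a_1+a_2$ with $1\le a_1<a_2$, put $\rho_{a_i}=\zeta_{a_i}\circ\tihol$, and set
\[
\rho=\rho_{a_1}\oplus\rho_{a_2}\co \pi_K\longrightarrow \SL(a_1,\C)\times\SL(a_2,\C)\subseteq \SL(n,\C).
\]
Writing $V_a$ for the irreducible $\SL(2,\C)$-module of dimension $a$, the Clebsch--Gordan rule $V_a\otimes V_b\cong\bigoplus_{l=0}^{\min(a,b)-1}V_{|a-b|+1+2l}$ gives, as a $\pi_K$-module via $\tihol$,
\[
\mathfrak{gl}(n,\C)_{\operatorname{Ad}\rho}\;\cong\;\bigoplus_{i,j=1}^{2}\bigl(V_{a_i}^{*}\otimes V_{a_j}\bigr)\circ\tihol ,
\]
which decomposes into $\sum_{i,j}\min(a_i,a_j)=n+2a_1$ irreducible summands of the form $V_m\circ\tihol$; deleting the single trivial ($V_1$) summand leaves $\mathfrak{sl}(n,\C)_{\operatorname{Ad}\rho}$ as a sum of $n+2a_1-1$ such pieces. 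The whole point of using two blocks is that this exceeds the single-block value $n-1$ (for the irreducible $\rho_n$ one has $\mathfrak{sl}(n,\C)_{\operatorname{Ad}\rho_n}\cong V_3\oplus\cdots\oplus V_{2n-1}$, exactly $n-1$ summands).

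Next I would run the cohomology count. Since $E_K$ is aspherical and the Menal-Ferrer--Porti vanishing theorem gives $\dim H^1(E_K;V_m\circ\tihol)=1$ for each summand (the single cusp contributing one class through half-lives-half-dies), summing over the decomposition yields
\[
\dim H^1(E_K;\mathfrak{sl}(n,\C)_{\operatorname{Ad}\rho})=n+2a_1-1,\qquad \dim H^0(E_K;\mathfrak{sl}(n,\C)_{\operatorname{Ad}\rho})=1 ,
\]
the second equality because $\rho_{a_1}\not\cong\rho_{a_2}$ are irreducible, so the only invariant endomorphisms are the block scalars. Concretely this says that the centralizer of $\operatorname{Ad}\rho|_{\partial E_K}$ is strictly larger than for $\rho_n$, and half-lives-half-dies transports the excess into genuine classes in $H^1(E_K;-)$.

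The decisive step is to convert $\dim H^1>n-1$ into a lower bound on an actual component of the character variety. A large $H^1$ only bounds the Zariski tangent space from above, and because $\chi(E_K)=0$ the naive obstruction estimate collapses (Euler characteristic forces $\dim H^2=\dim H^1-\dim H^0$); so I must prove that $\rho$ is a \emph{smooth} point of $R(\pi_K,\SL(n,\C))$, i.e. $\dim_\rho R=\dim Z^1(\pi_K;\mathfrak{sl}(n,\C)_{\operatorname{Ad}\rho})$, by extending the Menal-Ferrer--Porti smoothness argument from the irreducible $\rho_n$ to the sum $\rho$ (the obstructing cup products land in $H^2\cong H^1(E_K,\partial E_K;-)^{*}$, which is controlled by the same boundary analysis). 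Granting smoothness, the off-diagonal group $H^1(\pi_K;\Hom(\rho_{a_1},\rho_{a_2}))$ has dimension $a_1>0$, so $\rho$ has first-order deformations that are not block-triangular; smoothness integrates these to genuine irreducible representations, whence the generic point of the component of $R$ through $\rho$ is irreducible and the fibre of $R\to X$ there has dimension $n^2-1$. Therefore
\[
\dim_{[\rho]}X(\pi_K,\SL(n,\C))\;\ge\;\dim Z^1-(n^2-1)=\dim H^1-\dim H^0=n+2a_1-2\;>\;n-1 ,
\]
so the component carrying $[\rho]$ is high-dimensional; since this works for every large $n$ (e.g. $a_1=1$, $a_2=n-1$), it yields arbitrarily large such $n$.

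I expect the real work to be in two places. First, the smoothness/unobstructedness of $R$ at the \emph{reducible} $\rho$: reducible characters lie on the non-stable locus of the GIT quotient, so one must both verify that the obstruction cocycles vanish and confirm that nearby generic representations are irreducible (rather than merely block-triangular, which would not change the character). Second, removing the hyperbolicity hypothesis. Here I would invoke geometrization: a non-trivial non-hyperbolic knot is a torus knot or a satellite. Torus-knot groups have explicitly computable $\SL(n,\C)$-character varieties in which high-dimensional components can be exhibited directly, while for satellites one replaces $\tihol$ by an irreducible $\SL(2,\C)$-representation with non-central (hence regular) meridian image --- available for every non-trivial knot by the theorem of Kronheimer--Mrowka --- and repeats the two-block construction; the only genuinely new difficulty is re-establishing the twisted-cohomology vanishing used above in the absence of the hyperbolic holonomy, which I regard as the main obstacle of the whole argument.
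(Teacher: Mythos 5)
Your route is entirely different from the paper's, and as you yourself flag twice, it has unfilled gaps that are each essentially as hard as the theorem. Three concrete problems. (i) The cohomology count is wrong as stated: the Menal-Ferrer--Porti vanishing theorem gives $H^1(E_K;V_m\circ\tihol)=0$ for \emph{even} $m$ (only the odd-dimensional summands contribute one class per cusp; this is exactly why the irreducible $\rho_n$ gives $\dim H^1=n-1$). Hence the off-diagonal blocks $V_{a_1}^{*}\otimes V_{a_2}$ contribute nothing unless $a_1\equiv a_2\pmod 2$, and your suggested choice $a_1=1$, $a_2=n-1$ gains nothing when $n$ is odd. This is repairable by matching parities, but the next two points are not. (ii) You never prove that $R(\pi_K,\SL(n,\C))$ is smooth at the reducible $\rho=\rho_{a_1}\oplus\rho_{a_2}$, nor that the off-diagonal cocycles integrate to genuinely irreducible deformations. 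Without both, a large $H^1$ only bounds the Zariski tangent space at a non-stable point of the GIT quotient and gives no lower bound on the dimension of any component: the extra classes could be obstructed at second order, or could integrate only to block-triangular deformations whose characters already lie in the $(n-1)$-dimensional families. The Menal-Ferrer--Porti smoothness argument is specific to $\zeta_n\circ\tihol$ and does not transfer to direct sums for free. (iii) The non-hyperbolic case is deferred to a sketch that you yourself call ``the main obstacle of the whole argument''; since the theorem is asserted for \emph{every} non-trivial knot, this is a hole, not a routine reduction.

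The paper sidesteps all of this with a purely group-theoretic argument. By Cooper--Long--Reid, every non-trivial knot group has a finite-index subgroup $H\leq\pi_K$, say of index $k$, surjecting onto $F_2$. Pulling back along this surjection embeds $X(F_2,\SL(m,\C))$, of dimension $m^2-1$, into $X(H,\SL(m,\C))$ (Lemma~\ref{lem:epimorphism-representation-varieties}), and inducing representations from $H$ up to $\pi_K$ gives a map $X(H,\SL(m,\C))\to X(\pi_K,\SL(mk,\C))$ with finite fibers (Rapinchuk; Lemma~\ref{lem:induced-representations}), hence a component of dimension at least $m^2-1$ in $X(\pi_K,\SL(mk,\C))$. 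For even $m>k$ one has $m^2-1>mk-1=n-1$. No hyperbolicity, no twisted cohomology, no smoothness of the representation variety is needed. If you want to pursue your construction anyway, the deformation literature cited in the introduction for reducible representations is the place to start, but note that those results produce components of dimension exactly $n-1$, so the unobstructedness you are assuming is genuinely the crux and not a formality.
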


Given a group $G$ we now denote by $X^\mathit{irr}(G,\sl(n,\C))$ the character variety corresponding to irreducible representations. We refer to Section~\ref{section:proof-irreducible-reps} for the precise definition. The following is now a more refined version of Theorem~\ref{thm:main}.

\begin{theorem}\label{mainthm2}
Let $K\subset S^3$ be a non-trivial knot. Then given any $N\in \N$ there exists a
$p\geq N$,  such that 
$X^\mathit{irr}(\pi_K,\sl(p,\C))$ contains a high-dimensional component.
\end{theorem}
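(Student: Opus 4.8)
The plan is to bypass any smoothness analysis of the character variety and instead exhibit, for suitable $p$, an explicit irreducible subvariety of $X^\mathit{irr}(\pi_K,\SL(p,\C))$ whose dimension already exceeds the threshold $p-1$; the component containing it is then high-dimensional by definition. The engine is induction of representations from a finite-index subgroup whose own character varieties grow \emph{quadratically} in the target dimension, so that the induced family eventually outgrows the threshold $p-1$, which is only linear in $p$.

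First I would record the group-theoretic input. Since $K$ is non-trivial, $\pi_K$ is non-abelian, and by geometrization together with the virtual fibering and virtual specialness results for the hyperbolic pieces, and the classical analysis of torus-knot and graph-manifold groups, the group $\pi_K$ is \emph{large}: some finite-index subgroup surjects onto a free group of rank two. Passing to the normal core and then to a suitable finite-index subgroup of it, I may fix a \emph{normal} subgroup $H\trianglelefteq\pi_K$ of finite index $m$ together with an epimorphism $q\colon H\onto F$ onto a free group $F$ of rank two. Establishing this largeness for \emph{every} non-trivial knot is the step I expect to be the main obstacle: it is where the hard $3$-manifold topology enters, and the three geometric types (torus, satellite, hyperbolic) must each be addressed.

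Next, for each integer $k$ I would pull back the irreducible representations of $F$ along $q$ to obtain a family $\{\sigma_t\}$ of irreducible representations $H\to\SL(k,\C)$ whose image in $X^\mathit{irr}(H,\SL(k,\C))$ has dimension $\dim X^\mathit{irr}(F,\SL(k,\C))=k^2-1$; surjectivity of $q$ makes the induced map on characters injective. Inducing gives $\operatorname{Ind}_H^{\pi_K}\sigma_t\colon\pi_K\to\GL(mk,\C)$, and twisting by a scalar character of $\pi_K$ (possible since $H_1(\pi_K)=\Z$ and $\C^*$ is divisible) normalizes the determinant into $\SL(mk,\C)$ at the cost of at most one dimension. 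Because $H$ is normal, induction is finite-to-one on characters by Mackey theory (two representations of $H$ induce conjugate representations only when they differ by one of the $m$ conjugation-twists from $\pi_K/H$), so the resulting family of characters of $\pi_K$ has dimension at least $k^2-2$.

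Finally I would verify irreducibility and carry out the dimension count. By Mackey's irreducibility criterion $\operatorname{Ind}_H^{\pi_K}\sigma_t$ is irreducible exactly when the $m$ conjugates $\sigma_t^{\,g}$, $g\in\pi_K/H$, are pairwise non-isomorphic; each coincidence $\sigma_t\cong\sigma_t^{\,g}$ is a proper closed condition on the $(k^2-1)$-dimensional family, so a Zariski-dense open subset induces irreducibly and lands in $X^\mathit{irr}(\pi_K,\SL(mk,\C))$. Setting $p=mk$, this yields an irreducible subvariety of $X^\mathit{irr}(\pi_K,\SL(p,\C))$ of dimension at least $k^2-2$, which exceeds $p-1=mk-1$ as soon as $k>m+1$. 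Hence for all such $k$ the variety $X^\mathit{irr}(\pi_K,\SL(mk,\C))$ has a high-dimensional component, and letting $k\to\infty$ produces arbitrarily large admissible $p$, as required.
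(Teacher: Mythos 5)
Your overall architecture (largeness of $\pi_K$, pull back irreducibles of $F_2$ to a finite-index normal subgroup, induce up, count dimensions) is the same as the paper's, and the group-theoretic input you worry about is indeed available off the shelf: the paper quotes Cooper--Long--Reid for the virtual surjection onto $F_2$ and passes to the normal core exactly as you do. The gap is in your final step. You assert that for each $g\in\pi_K\smallsetminus H$ the coincidence locus $\{t : \sigma_t\cong\sigma_t^{\,g}\}$ is a \emph{proper} closed subvariety of the family, so that a dense open subset induces irreducibly. Closedness is clear (it is an equality of characters of semisimple representations), but properness is not, and you give no argument for it. Since every $\sigma_t$ factors as $\beta_t\circ q$, the conjugate is $\sigma_t^{\,g}=\beta_t\circ q\circ c_{g^{-1}}$; if $g$ happens to normalize $\ker q$ and to induce an inner automorphism of $F$, then $\sigma_t\cong\sigma_t^{\,g}$ for \emph{every} $t$, the coincidence locus is everything, and by Mackey no member of your family induces irreducibly. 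Nothing in your setup excludes this, and it is exactly the difficulty the paper's proof is built to sidestep: the authors only verify the non-coincidence condition by explicit computation for the figure-eight knot, not in general.

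The paper's workaround is a dichotomy you would need to reproduce. Either the induced family $D\subset X(\pi_K,\SL(mk,\C))$ meets the irreducible locus, in which case one is done with $p=mk$; or every character in $D$ is that of a reducible semisimple representation $\rho_1\oplus\cdots\oplus\rho_l$, in which case Corollary~\ref{cor:induced_is_semisimple} forces each block size $p_j$ to be a multiple of $m$ in $[m,mk)$, and a count over the finitely many decomposition types $(p_1,\ldots,p_l)$ (using that the direct-sum-plus-scalar-twist map has finite fibers on irreducible factors) shows that some $X^\mathit{irr}(\pi_K,\SL(p_j,\C))$ already has dimension at least $(m^2-k)/k>p_j-1$. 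This is also why the theorem only promises \emph{some} $p\geq N$ rather than $p=mk$; your argument, if it worked, would prove the stronger statement that $p=mk$ always works, which should itself be a warning sign. To repair your proof you must either justify the properness of the coincidence loci for general knot groups or add the reducible-case analysis.
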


In the special case of the figure-eight knot we obtain a refined quantitative result:
\begin{corollary}\label{cor:figure-8}
Let $K\subset S^3$ be the figure-eight knot. Then for all $n\in \N$ the representation variety
$X(\pi_K,\sl(10n,\C))$ has a component $C$ of dimension at least $4n^2-1$.
Moreover, $C$ contains characters of irreducible representations.
\end{corollary}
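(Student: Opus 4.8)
The plan is to run the construction underlying Theorem~\ref{mainthm2} for the figure-eight knot while tracking every dimension explicitly, exploiting the especially rich arithmetic of this knot. Recall that the figure-eight exterior $E_K$ fibers over $S^1$ with fiber the once-punctured torus $F$, so $\pi_K = F_2 \rtimes_\phi \Z$ with $F_2 = \pi_1(F) = \langle a,b\rangle$ and monodromy $\phi$ acting on $H_1(F) = \Z^2$ by $A = \begin{pmatrix} 2 & 1 \\ 1 & 1 \end{pmatrix}$; the longitude is the boundary curve $[a,b]$ and the Alexander polynomial is $\det(tI-A) = t^2-3t+1$, with roots $r^{\pm1} = \tfrac{3\pm\sqrt5}{2}$. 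Two numerical inputs make the figure-eight special: (i) $A \bmod 5$ has characteristic polynomial $(t+1)^2$ over $\F_5$, so it is $(-1)$ times a regular unipotent and has order $10$ in $\GL(2,\F_5)$; and (ii) $\det A = 1$, which will force the monodromy to act trivially on the center of the relevant Heisenberg group.

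First I would build the $5$-dimensional building block. Let $X_0, Y_0 \in \SL(5,\C)$ be clock-and-shift matrices with $[X_0,Y_0] = \zeta_5 I$, spanning the unique irreducible $V_5$ of the finite Heisenberg group $H_5$ with central character $\zeta_5$. Since $\phi$ acts trivially on the center of $H_5$ (because $\det A = 1$), the pulled-back representation $V_5 \circ \phi$ again has central character $\zeta_5$, hence is isomorphic to $V_5$; choosing an intertwiner $T_V$ lets me define $\alpha \colon \pi_K \to \SL(5,\C)$ by $a\mapsto X_0$, $b\mapsto Y_0$, $t\mapsto T_V$. By construction $\alpha$ sends the longitude $[a,b]$ into the center, which is precisely the feature forcing a large peripheral centralizer, and hence large $H^2$, at the representations built below.

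Next I would assemble the $10n$-dimensional representation. Fix the geometric progression of abelian characters $\psi_i \colon \pi_K \to \C^*$, $i=1,\dots,2n$, with $\psi_i$ sending the meridian to $r^{\,i}$, and set $\rho = \bigoplus_{i=1}^{2n}(\alpha\otimes\psi_i)$, adjusting determinants so that $\rho \colon \pi_K \to \SL(10n,\C)$. The point of the ratio $r$ is resonance with the Alexander polynomial: the off-diagonal Hom-modules $\mathrm{Hom}(\alpha\otimes\psi_j,\alpha\otimes\psi_i) = \mathrm{End}(\C^5)_{\mathrm{Ad}\,\alpha}\otimes(\psi_i\psi_j^{-1})$ carry the twist $r^{\,i-j}$, and $r^{\pm1}$ are exactly the roots of $\Delta_K$. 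I would then compute $H^1(\pi_K;\mathfrak{sl}(10n,\C)_{\mathrm{Ad}\,\rho})$ through its block decomposition $\bigoplus_{i,j} H^1(\pi_K;\mathrm{Hom}(\alpha\otimes\psi_j,\alpha\otimes\psi_i))$, decomposing each coefficient module $\mathrm{End}(\C^5)_{\mathrm{Ad}\,\alpha}$ into $\phi$-orbits on $(\Z/5)^2$ (where the order-$10$ fact governs the orbit structure) and evaluating the resulting twisted Alexander invariants. The target estimate is $\dim H^1 \geq 4n^2-1$, arising from the $\sim(2n)^2$ blocks whose twist lands on a root of $\Delta_K$.

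The main obstacle is this dimension count together with integrability. Concretely I must (a) show that enough of the $(2n)^2$ Hom-blocks have nonvanishing $H^1$ — here the geometric progression, the order-$10$ reduction mod $5$, and the resonance with $\Delta_K$ all have to conspire to produce $4n^2-1$ rather than merely $O(n)$ — and (b) verify that the corresponding cocycles are unobstructed, so that $\rho$ genuinely lies on a component of $X(\pi_K,\SL(10n,\C))$ of dimension at least $\dim H^1$, rather than this merely bounding the Zariski tangent space. For (b) I would use that the resonance pattern links consecutive summands $\alpha\otimes\psi_i$ and $\alpha\otimes\psi_{i+1}$, so the nonabelian cocycles glue the summands into an irreducible deformation, which simultaneously shows the component contains irreducible characters. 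A standard quadratic-cone/obstruction argument (or exhibiting an explicit transverse family) then upgrades the tangent-space estimate to the stated lower bound on the dimension of the component.
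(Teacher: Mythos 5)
Your construction is genuinely different from the paper's, but it has a fatal gap in the dimension count, which is the heart of the matter. The estimate $\dim H^1(\pi_K;\mathfrak{sl}(10n,\C)_{\mathrm{Ad}\rho})\geq 4n^2-1$ is false for the representation $\rho=\bigoplus_{i=1}^{2n}(\alpha\otimes\psi_i)$ you build. Decomposing $\mathrm{End}(\C^5)_{\mathrm{Ad}\alpha}$ into the $\phi$-orbits on $(\Z/5)^2$ (one fixed point, two orbits of size $2$, two of size $10$), each block $H^1\big(\pi_K;\mathrm{Hom}(\alpha\otimes\psi_j,\alpha\otimes\psi_i)\big)$ is computed by finitely many \emph{fixed} twisted Alexander polynomials (independent of $n,i,j$) evaluated at $r^{i-j}$ or a bounded power thereof, and is nonzero only when that value hits a root of one of these polynomials. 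Since $r=(3+\sqrt5)/2$ is real and not a root of unity, $r^{k}$ meets any fixed finite set of complex numbers for only finitely many integers $k$; hence only the pairs $(i,j)$ with $i-j$ in a fixed finite set contribute, i.e.\ $O(n)$ of the $(2n)^2$ blocks, each contributing a bounded amount. In particular your phrase ``$\sim(2n)^2$ blocks whose twist lands on a root of $\Delta_K$'' is wrong: $r^{i-j}\in\{r,r^{-1}\}$ forces $i-j=\pm1$. So $\dim H^1=O(n)$, nowhere near $4n^2-1$. The integrability step (b) is also not routine, but it is moot: even granting it, the Euler-characteristic identity $\dim H^1-\dim H^2=\dim H^0$ for a knot exterior shows that the soft lower bound $\dim Z^1-\dim H^2$ only gives $(10n)^2-1$ for the representation variety, i.e.\ roughly $\dim H^0=2n-1$ for the component of the character variety through $\chi_\rho$ --- which is the honest order of magnitude for a sum of solvable-image blocks twisted by characters.

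The structural reason your approach cannot reach $n^2$ is that a single rigid $5$-dimensional building block tensored with $2n$ fixed characters carries only $O(n)$ worth of deformation data. The paper instead locates the $n^2$ \emph{before} any cohomology enters: the index-$5$ subgroup $H=\mathrm{Stab}(1)$ of the dihedral cover surjects onto $F_2$, so $X(H,\SL(2n,\C))$ contains a copy of $X(F_2,\SL(2n,\C))$, of dimension $(2n)^2-1=4n^2-1$ (Example~\ref{ex:free} and Lemma~\ref{lem:epimorphism-representation-varieties}); the induction map $\bar\iota\co X(H,\SL(2n,\C))\to X(\pi_K,\SL(10n,\C))$ has finite fibers (Lemma~\ref{lem:induced-representations}), so the image has the same dimension, and Mackey's criterion (via the element $y_0^2\in N$ with $\alpha|_N(y_0^2)=I$ but $(\alpha|_N)^a(y_0^2)=BA$, $(\alpha|_N)^{a^2}(y_0^2)=A$) shows the induced representations are irreducible. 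If you want to salvage your fibered-structure viewpoint, you would need to replace the Heisenberg block by a family with $\Theta(n^2)$ free parameters, which in effect forces you back to a free quotient of a finite-index subgroup.
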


\begin{remark}
For a free group $F_r$ we have: $\dim X(F_r,\sl(n,\C))/(n^2-1) = (r-1)$, and hence
$ \limsup\limits_{n\to\infty} \dim X(G,\sl(n,\C))/(n^2-1) \leq (r-1)$ if $G$ is generated by $r$ elements.
It follows from Corollary~\ref{cor:figure-8} that for the figure-eight knot $K=4_1$
\[
1/25 \leq \limsup\limits_{n\to\infty} \big(\dim X(\pi_K,\sl(n,\C))/(n^2-1)\big)\leq 1
\] 
holds.
\end{remark}

\section{Representation and character varieties}
\label{section:definition-character-variety}

Before we provide the proof of Theorem~\ref{thm:main} we recall some definitions and facts.
The general reference for representation and character varieties is 
 Lubotzky's and Magid's book \cite{LM85}. 
 
 Given two representations $\rho_1\co G \to\mathrm{GL}(n_1,\C)$ and
$\rho_2\co G\to\mathrm{GL}(n_2,\C)$ we define the \emph{direct sum}
$\rho_1\oplus\rho_2\co G\to\mathrm{GL}(n_1+n_2,\C)$ 
by
\[
\big(\rho_1\oplus\rho_2\big)(\gamma) = 
\left(\begin{array}{c|c} \rho_1(\gamma) & 0 \\ \hline 0 &\rho_2(\gamma)\end{array}\right)\,.
\]
\begin{definition}\label{def:irreducibleRep}
We call a representation $\rho\co G \to\mathrm{GL}(n,\C)$ \emph{reducible} if there exists a nontrivial subspace $V\subset\C^n$, $0\neq V\neq \C^n$, such that $V$ is 
$\rho(G)$-stable. The representation 
$\rho$ is called \emph{irreducible} or \emph{simple}
 if it is not reducible.
A \emph{semisimple} representation is a direct sum of simple representations.
\end{definition}

 Let $G=\langle g_1,\ldots,g_r\rangle$ be a finitely generated group. 
A $\SL(n,\C)$-representation is a homomorphism 
$\rho\colon G \to\SL(n,\C)$.
The 
$\SL(n,\C)$-representation variety is
\[
R( G,\sl(n,\C))\,\, =\,\, \mathrm{Hom} ( G,\SL(n,\C))
\,\,\subset \,\,
\SL(n,\C)^r\,\,\subset\,\, M_n(\C)^r\,\,\cong\,\,\C^{n^2r}\,.
\]
The representation variety 
$R( G,\sl(n,\C))$ is an affine algebraic set. It is contained in
$\SL(n,\C)^r $ via the inclusion 
$\rho\mapsto\big(\rho(g_1),\ldots,\rho(g_r)\big)$, and
it is
the set of solutions of a system of polynomial equations in the matrix coefficients.

The group $\SL(n,\C)$ acts by conjugation on $R(G,\sl(n,\C))$.
More precisely, for $A\in\SL(n,\C)$ and $\rho\in R(G,\sl(n,\C))$ we define
$(A.\rho) (g) = A \rho(g) A^{-1}$ for all $g\in G$. 
In what follows we will write $\rho\sim\rho'$ if there exists 
an $A\in\SL(n,\C)$ such that $\rho'=A.\rho$, and we will call $\rho$ and $\rho'$ \emph{equivalent}.
For $\rho\in R( G,\sl(n,\C))$ we define its \emph{character}
$\chi_\rho\co G\to\C$ by $\chi_\rho(\gamma)= \mathrm{tr}(\rho(\gamma))$.
We have $\rho\sim\rho' \Rightarrow\chi_\rho = \chi_{\rho'}$.
Moreover, if $\rho$ and $\rho'$ are semisimple, then $\rho\sim\rho' $ if and only if $\chi_\rho =\chi_{\rho'}$. 
(See Theorems~1.27 and  1.28 in Lubotzky's and Magid's book \cite{LM85}.)

The \emph{algebraic quotient} or \emph{GIT quotient} for the action of $\SL(n,\C)$ on
$R(G,\sl(n,\C))$ is called the \emph{character variety}. This quotient  will be denoted by
$X(G,\sl(n,\C)) = R(G,\sl(n,\C))\sslash\SL(n,\C)$.
The character variety is not necessarily an irreducible affine algebraic set.
Work of C.~Procesi \cite{Pro76} implies that there exists a finite number of group elements
$\{\gamma_i\mid 1\leq i \leq M\}\subset G$ such that the image of 
$t\co R(G,\sl(n,\C))\to\C^M$ given by 
\[t(\rho) = \big(\chi_\rho(\gamma_1),\ldots,\chi_\rho(\gamma_M)\big)\] 
can be identified with the affine algebraic set $X(G,\sl(n,\C))\cong t(R(G,\sl(n,\C)))$,
see also \cite[p.~27]{LM85}. This justifies the name \emph{character variety}.
For an introduction to algebraic invariant theory see Dolgachev's book \cite{Dol04}.
For a brief introduction to $\SL(n,\C)$-representation and character varieties of groups see \cite{Heu16}.

\begin{example} \label{ex:free}
For a free group $F_r$ of rank $r$ we have
$R(F_r,\sl(n,\C))\cong \SL(n,\C)^r$ is an irreducible algebraic variety of dimension
$r(n^2-1)$, and the dimension of the character variety $X(F_k,\SL(n,\C))$ is $(r-1)(n^2-1)$.
\end{example}

The first homology group of the knot exterior is isomorphic to $\Z$.
A canonical surjection
$\varphi\co \pi_K\to\Z$ is given by $\varphi(\gamma)=\mathrm{lk}(\gamma,K)$
where $\mathrm{lk}$ denotes the linking number in $S^3$ (see \cite[3.B]{BZH}).
Hence, every \emph{abelian} representation of a knot group $\pi_K$ factors through 
$\varphi\co \pi_K\to\Z$. Here, we call $\rho$ \emph{abelian} if its image is abelian.
Therefore, we obtain for each non-zero complex number $\eta\in\C^*$ an abelian representation
$\eta^\varphi\co\pi_K\to\mathrm{GL}(1,\C) =\C^*$ given by
$\gamma\mapsto \eta^{\varphi(\gamma)}$. Notice that a $1$-dimensional representation is always irreducible.

Let $W$ be a finite dimensional $\C$-vector space.
For every representation $\rho\co G\to\GL(W)$ the vector space $W$ turns into a $\C[G]$-left module via $\rho$.
This $\C[G]$-module will be denoted by $W_\rho$ or simply $W$ if no confusion can arise. 
Notice that every finite dimensional $\C$-vector space $W$ which is a $\C[G]$-left module gives a representation 
$\rho\co G\to\GL(W)$, and by fixing a basis of $W$ we obtain a matrix representation. 
%
%

The following lemma follows from Proposition~1.7 in \cite{LM85} and the discussion therein.

\begin{lemma}\label{lem:epimorphism-representation-varieties}
Any group epimorphism $\a\co G\twoheadrightarrow F$ between finitely generated groups
induces a closed embedding
$R(F,\sl(n,\C))\hookrightarrow R(G,\sl(n,\C))$ of algebraic varieties, and an  injection
\[
X(F,\sl(n,\C))\hookrightarrow X(G,\sl(n,\C))\,.
\]
\end{lemma}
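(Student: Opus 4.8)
The plan is to realize the induced map on representation varieties concretely as a precomposition map and to verify the two assertions separately.

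First I would define $\a^*\co R(F,\sl(n,\C))\to R(G,\sl(n,\C))$ by $\a^*(\rho)=\rho\circ\a$. Injectivity on points is immediate from surjectivity of $\a$: if $\rho_1\circ\a=\rho_2\circ\a$, then for any $f\in F$ we pick $g\in G$ with $\a(g)=f$ and read off $\rho_1(f)=\rho_2(f)$. To see that $\a^*$ is a closed embedding of algebraic sets I would fix a generating set $g_1,\dots,g_r$ of $G$ and use $f_i:=\a(g_i)$ as a generating set of $F$, which is legitimate since $\a$ is onto. With respect to the resulting embeddings $R(G,\sl(n,\C))\subset\SL(n,\C)^r$ and $R(F,\sl(n,\C))\subset\SL(n,\C)^r$ the map $\a^*$ becomes literally the inclusion of one subset of $\SL(n,\C)^r$ into the other: the tuple $(\rho(f_1),\dots,\rho(f_r))$ is sent to $(\rho(\a(g_1)),\dots,\rho(\a(g_r)))$, the same tuple. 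It then suffices to identify the image. Writing $N=\Ker\a$, so that $F\cong G/N$, a tuple in $R(G,\sl(n,\C))$ lies in $R(F,\sl(n,\C))$ exactly when the associated representation of $G$ is trivial on $N$, i.e.\ when $\rho(w)=\mathrm{Id}$ for every $w\in N$. Each such condition is polynomial in the matrix entries, so the image is an intersection of closed subsets and hence Zariski closed; this yields the desired closed embedding, which is the content of Proposition~1.7 in \cite{LM85}.

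Next I would descend to character varieties. The map $\a^*$ is $\SL(n,\C)$-equivariant, since conjugation commutes with precomposition: $\a^*(A.\rho)=A.\a^*(\rho)$. Hence $\a^*$ induces a morphism $X(F,\sl(n,\C))\to X(G,\sl(n,\C))$ of the GIT quotients. For injectivity I would use the identification of points of the character variety with characters of semisimple representations, combined with a trace computation: if $\rho_1,\rho_2$ are semisimple representations of $F$ with $\chi_{\rho_1\circ\a}=\chi_{\rho_2\circ\a}$, then for every $f\in F$, choosing $g$ with $\a(g)=f$ gives $\chi_{\rho_i}(f)=\tr(\rho_i(\a(g)))=\chi_{\rho_i\circ\a}(g)$, so that $\chi_{\rho_1}=\chi_{\rho_2}$. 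By Theorems~1.27 and~1.28 of \cite{LM85} this forces $\rho_1\sim\rho_2$, so the two characters define the same point of $X(F,\sl(n,\C))$.

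I do not expect a serious obstacle; the only delicate points are bookkeeping ones. The first is ensuring that the set-theoretic inclusion is genuinely a closed embedding of varieties and not merely a continuous injection, which is precisely why I choose compatible generators so that the two ambient spaces coincide. The second is that points of a GIT quotient are closed orbits rather than arbitrary orbits, so injectivity is best phrased via the semisimple (equivalently, closed-orbit) representatives: since $\a^*$ is an injective equivariant closed immersion, distinct closed orbits in $R(F,\sl(n,\C))$ have distinct images, and the image of a closed orbit under a closed immersion is again closed and, by equivariance, a single orbit. Either formulation delivers the claimed injection on character varieties.
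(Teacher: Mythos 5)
Your argument is correct and is essentially the standard proof that the paper delegates to Proposition~1.7 of \cite{LM85}: realize the map as precomposition, identify its image with the closed locus of representations of $G$ killing $\Ker\a$, and deduce injectivity on character varieties from the fact that characters determine semisimple representations up to conjugation. The paper gives no further detail beyond the citation, so your write-up simply supplies the details of the same route.
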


Let $H\leq G$ be a subgroup of finite index. Then the restriction of a  representation
$\rho\co G\to\SL(n,\C)$ to $H$ will be denoted by $\res_H^G\rho$ or simply by $\rho|_H$ if no confusion can arise.
This restriction is compatible with the action by conjugation and it induces a regular map
$\nu\co X(G,\SL(n,\C))\to X(H,\SL(n,\C))$.
In what follows we will make use of the following result of A.S.~Rapinchuk which follows directly from
\cite[Lemma~1]{Rap98}
\begin{lemma}\label{lem:restiction}
If $H\leq G$ is a subgroup of finite index $k$, then 
\[
\nu\co X(G,\sl(n,\C)) \to X(H,\sl(n,\C))
\]
has finite fibers. 
\end{lemma}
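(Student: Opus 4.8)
The plan is to translate the statement about the GIT quotients into a statement about equivalence classes of semisimple representations and then to invoke Rapinchuk's lemma. Recall from the facts collected above that the points of $X(G,\sl(n,\C))$ are in bijection with equivalence classes of semisimple representations $G\to\SL(n,\C)$, two such representations giving the same point precisely when they are equivalent, and likewise for $X(H,\sl(n,\C))$. Under this identification $\nu$ sends the class of a semisimple representation $\rho$ of $G$ to the class of the semisimplification of $\rho|_H$, since a character determines a representation only up to semisimplification.

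First I would record the classical fact from Clifford theory that restriction to a finite-index subgroup preserves semisimplicity: if $\rho$ is semisimple and $[G:H]<\infty$, then $\rho|_H$ is semisimple. Hence for semisimple $\rho$ the semisimplification of $\rho|_H$ is just $\rho|_H$ itself, so that for a semisimple $\sigma\co H\to\SL(n,\C)$ one has $\nu(\chi_\rho)=\chi_\sigma$ if and only if $\rho|_H\sim\sigma$.

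Next I would describe a single fiber. Fix $y\in X(H,\sl(n,\C))$; if $y$ lies outside the image its fiber is empty, so I may assume $y=\chi_\sigma$ for a semisimple $\sigma$. By the previous step $\nu^{-1}(y)$ is exactly the set of classes $[\rho]$ of semisimple representations $\rho$ of $G$ with $\rho|_H\sim\sigma$. Conjugating a chosen representative by a suitable element of $\SL(n,\C)$ — which does not change its class in $X(G,\sl(n,\C))$ — I may arrange $\rho|_H=\sigma$ on the nose, so that each point of $\nu^{-1}(y)$ is the character of an honest extension of the single representation $\sigma$ to $G$, and two such extensions give the same point exactly when they are equivalent.

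Finally, \cite[Lemma~1]{Rap98} supplies precisely the needed finiteness statement: for $H\leq G$ of finite index and a fixed representation $\sigma$ of $H$, only finitely many equivalence classes of representations of $G$ restrict to $\sigma$. Therefore $\nu^{-1}(y)$ is finite. I expect no genuine obstacle here, since the mathematical content is entirely Rapinchuk's lemma; the two points requiring care are the passage from characters (points of the quotient) to equivalence classes of semisimple representations, and the use of Clifford's theorem so that the fiber can be rephrased in terms of extensions of the single representation $\sigma$. It is worth noting that the analogous fibers of the restriction map $R(G,\sl(n,\C))\to R(H,\sl(n,\C))$ on representation varieties are typically positive-dimensional, so the passage to the quotient is essential.
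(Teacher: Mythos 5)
Your proposal is correct and follows the same route as the paper, which simply states that the lemma ``follows directly from \cite[Lemma~1]{Rap98}'' and gives no further argument. You have merely made explicit the reduction the authors leave implicit --- identifying points of the GIT quotient with equivalence classes of semisimple representations, using Clifford's theorem to see that restriction preserves semisimplicity, and then invoking Rapinchuk's finiteness statement for the set of equivalence classes of representations of $G$ extending a fixed semisimple representation of $H$.
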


\subsection{The induced representation}\label{sec:induced}
Let $G$ be a group and let $H\leq G$ be a subgroup of finite index $k$. Given a representation 
$\alpha\co H\to \GL(m,\C)$ we refer to the representation of $G$ that is given by left multiplication by $G$ on
\[ \C[G]\otimes_{\C[H]} \C^m\]
as the induced representation. We denote by $e_1,\dots,e_m$ the standard basis of $\C^m$ and we pick representatives $g_1,\dots,g_k$ of $G/H$. It is straightforward to see that $g_i\otimes e_j$ with $i\in \{1,\dots,k\}$ and $j\in \{1,\dots,m\}$ form a basis for $ \C[\pi]\otimes_{\C[\ti{\pi}]} \C^m$ as a complex vector space.
Using the ordered  basis
\[ g_1\otimes e_1,\dots,g_1\otimes e_m,\dots,g_k\otimes e_1,\dots,g_k\otimes e_m\]
the induced representation can be viewed as a representation $\ind_H^G\alpha\co G\to \GL(mk,\C)$. 
If $\alpha\co H\to \SL(m,\C)$ is a representation into the special linear group, then for 
$g \in G$ a priori the determinant of $\ind_H^G\alpha(g)$ is in $\{\pm 1\}$. But it is straightforward to see that if $m$ is even,
then $\ind_H^G\alpha$ defines in fact a representation $G\to \sl(mk,\C)$.

\begin{lemma}\label{lem:induced-representations}
Let $m$ be even, and let $H\leq G$ be a subgroup of finite index $k$. Then the  map
\[ 
\iota\co R(H,\sl(m,\C))\to  R(G,\sl(mk,\C))
\]
given by $\iota(\alpha)= \ind_H^G\alpha$
is an injective algebraic map. It depends on the choice of a system of representatives, and it is compatible with the action of $\sl(m,\C)$ and $\sl(mk,\C)$ respectively.

 Moreover, the corresponding regular map $($which does not depend  
 on the choice of a system of representatives$)$
\[
\bar\iota\co X(H,\sl(m,\C))\to X(G,\sl(mk,\C))
\]
has finite fibers.
\end{lemma}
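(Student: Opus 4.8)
The plan is to establish the three claims in the lemma—that $\iota$ is an injective algebraic map compatible with conjugation, and that the induced $\bar\iota$ on character varieties has finite fibers—in sequence, reducing the last and hardest claim to Rapinchuk's result (Lemma~\ref{lem:restiction}) via a restriction-of-scalars argument.

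First I would verify that $\iota$ is \emph{algebraic}. Fix generators of $G$ and express each generator's action on the basis $g_1\otimes e_1,\dots,g_k\otimes e_m$ explicitly: for $g\in G$ and a coset representative $g_i$, we have $g g_i = g_{\sigma(i)} h_i$ for a unique permutation $\sigma$ (depending on $g$) and unique $h_i\in H$, so $g\cdot(g_i\otimes e_j)=g_{\sigma(i)}\otimes \alpha(h_i)e_j$. Thus the $mk\times mk$ matrix of $\ind_H^G\alpha(g)$ is a block matrix whose nonzero blocks are the $m\times m$ matrices $\alpha(h_i)$ placed according to $\sigma$. Since each entry is a polynomial (in fact a coordinate) in the entries of the matrices $\alpha(h)$, and the $h_i$ are fixed words in the generators of $H$, the map $\iota$ is given by polynomials in the coordinates of $R(H,\sl(m,\C))$, hence is a morphism of affine varieties. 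The \emph{injectivity} follows because $\alpha$ can be recovered from $\ind_H^G\alpha$: taking $g\in H$ the diagonal block indexed by the identity coset returns $\alpha(g)$ (more carefully, one reads off $\alpha(g)$ from the appropriate block of $\ind_H^G\alpha(g)$), so $\ind_H^G\alpha=\ind_H^G\alpha'$ forces $\alpha=\alpha'$.

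Next I would check \emph{compatibility with conjugation}. If $A\in\SL(m,\C)$ conjugates $\alpha$ to $A.\alpha$, then the block-diagonal matrix $\widetilde A=\diag(A,\dots,A)\in\SL(mk,\C)$ conjugates $\ind_H^G\alpha$ to $\ind_H^G(A.\alpha)$, since conjugation acts blockwise and the permutation structure of each $\ind_H^G\alpha(g)$ is unchanged. This shows $\iota$ intertwines the two conjugation actions, so it descends to a well-defined map $\bar\iota$ on the GIT quotients; that $\bar\iota$ is independent of the choice of representatives follows because a different choice yields a representation conjugate to the original by the permutation-type base-change matrix, hence with the same character.

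The main obstacle is the \emph{finite fibers} statement for $\bar\iota$, and here the key idea is to compose with restriction. I would use the standard fact that restricting the induced representation back to $H$ decomposes, up to conjugacy, as a direct sum indexed by the coset representatives: $\res_H^G(\ind_H^G\alpha)\cong\bigoplus_{i=1}^k {}^{g_i}\!\alpha$, where ${}^{g_i}\!\alpha$ denotes the conjugate representation $h\mapsto\alpha(g_i^{-1}hg_i)$ on the appropriate intersection—more precisely (Mackey's formula), $\res_H^G\ind_H^G\alpha=\bigoplus_{HgH}\ind_{H\cap g^{-1}Hg}^{H}({}^{g}\alpha)$. The character of $\ind_H^G\alpha$ therefore determines the character of this restricted direct sum, which in turn determines the unordered multiset of characters $\{\chi_{{}^{g_i}\alpha}\}$, and in particular $\chi_\alpha$ up to the finitely many conjugates by coset representatives. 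Concretely, I would argue that $\chi_{\bar\iota(\chi_\alpha)}$ determines $\chi_\alpha$ up to a finite ambiguity; alternatively, one can invoke Lemma~\ref{lem:restiction} directly by observing that $\nu\circ\bar\iota$ sends $\chi_\alpha$ to the character of $\res_H^G\ind_H^G\alpha$, whose fibers are finite. Since $H$ has finite index in $G$, $\nu$ itself has finite fibers by Lemma~\ref{lem:restiction}; combined with the finite-to-one nature of the restriction of the induced character, this forces $\bar\iota$ to have finite fibers. The care needed is in making the Mackey decomposition yield a \emph{genuinely finite} recovery of $\chi_\alpha$ rather than merely a constructible one, which is why I expect this to be the delicate step.
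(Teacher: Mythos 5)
The paper does not actually prove this lemma: it cites \cite[\S10.A]{CR81} for the first statement and quotes the finite-fibers claim as Lemma~3 of \cite{Rap98}, so your proposal is a genuine reconstruction rather than a parallel of an argument in the text. Your reconstruction is essentially correct and is in the spirit of Rapinchuk's proof. The first two parts (algebraicity via the explicit block-permutation matrices $g g_i = g_{\sigma(i)}h_i$, injectivity by reading off the block over the trivial coset, and equivariance via $A\mapsto \mathrm{diag}(A,\dots,A)$) are fine. For the finite-fibers part, one statement should be tightened: the character of $\res^G_H\ind_H^G\alpha$ does \emph{not} in general determine the unordered multiset $\{\chi_{\alpha^{g_i}}\}$ (the Mackey summands can be reducible and share constituents), and the appeal to Lemma~\ref{lem:restiction} is actually superfluous here --- what you need is that $\nu\circ\bar\iota$ is finite-to-one, which does not follow from $\nu$ being finite-to-one. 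The clean version of your idea is: the trivial double coset contributes $\alpha$ itself as a direct summand of $\res^G_H\ind_H^G\alpha$, so if $\chi_{\ind_H^G\alpha}$ is fixed, the semisimplification of $\res^G_H\ind_H^G\alpha$ is a fixed semisimple $\C[H]$-module $W$ of dimension $mk$ (determined by its character, by the Lubotzky--Magid results quoted in Section~\ref{section:definition-character-variety}), and the semisimplification of $\alpha$ must be an $m$-dimensional direct summand of $W$; a fixed semisimple module has only finitely many isomorphism classes of summands (sub-multisets of its irreducible constituents), so only finitely many characters $\chi_\alpha$ occur in the fiber. With that substitution your argument is complete and self-contained, which is arguably more informative than the paper's citation.
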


\begin{proof}
A very detailed proof of the first statement can be found in  \cite[\S10.A]{CR81} 
(see also \cite[pp.~9--10]{LM85} and \cite{Rap98}). The second part is Lemma~3 from \cite{Rap98}
\end{proof}

We are now in the position to prove our main result:

\begin{proof}[Proof of Theorem~\ref{thm:main}]
Let $K$ be a non-trivial knot. We write $\pi_K=\pi_1(E_K)$. 
Cooper, Long, and Reid~\cite[Theorem~1.3]{CLR97} (see also~\cite[Corollary~6]{Bu04}) showed that 
$G$ admits a finite-index subgroup $H$ that admits an epimorphism $\a\co H\to F_2$ onto a free group on two generators. 
It is clear, see Example~\ref{ex:free},  that $R(F_2,\sl(m,\C))\cong \SL(m,\C)^2$, and 
\[
\dim X(F_2,\SL(m,\C)) \,\,=\,\, m^2-1.
\]
It follows from Lemma~\ref{lem:epimorphism-representation-varieties} that the variety
$X(H,\SL(m,\C))$ has a component of dimension at least $m^2-1$.
We denote by $k$ the index of $H$ in $G$, and we will suppose that $m$ is even.
Then it follows  from Lemma~\ref{lem:induced-representations} that 
$X(G,\sl(mk,\C))$ contains an irreducible component of dimension at least $m^2-1$. 

Now for all $m>k$ we have $m^2-1>mk-1$. Therefore, for a given $N\in\N$ we choose
an even $m\in\N$, $m>k$, such that $n:= mk \geq N$.
The character variety $X(\pi_K,\sl(n,\C))$ contains an irreducible component whose dimension is bigger than $m^2-1>mk-1=n-1$.
\end{proof}

\section{Proof of Theorem~\ref{mainthm2}}\label{section:proof-irreducible-reps}
We let  $R^\mathit{irr}(G ,\sl(n,\C))\subset R(G ,\sl(n,\C))$ denote the Zariski-open subset of irreducible representations. The set $R^\mathit{irr}(G ,\sl(n,\C))$ is invariant by the $\SL(n,\C)$-action, and we will denote by $X^\mathit{irr}(G ,\sl(n,\C))\subset X(G ,\sl(n,\C))$ its image in the character variety. Notice that $X^\mathit{irr}(G ,\sl(n,\C))$ is an orbit space for the action of 
$\SL(n,\C)$ on $R^\mathit{irr}(G ,\sl(n,\C))$ (see \cite[Chap.~3, \S3]{Newstead1978}). 

Before we can give the proof of Theorem~\ref{mainthm2} we need to introduce several further definitions. These notations are classic (see \cite{Ser77, Bro82} for more details).

Let $H$ and $K$ be two subgroups of finite index of $G$, and let $\alpha\co H\to \mathrm{GL}(W)$ be a linear representation. Then for all $g\in G$ we obtained the \emph{twisted} representation $\alpha^g\co gHg^{-1}\to \GL(W)$ given by
 \[
 \alpha^g(x) = \alpha(g^{-1}x g), \text{ for $x\in gHg^{-1}$.}
 \]
 Notice that the twisted representation $\alpha^g$ is irreducible or semisimple if and only if $\alpha$ is irreducible or semisimple respectively.

Now, we choose a set of representatives $S$ of the $(K,H)$ double cosets of $G$. 
For $s\in S$, we let  $H_s = sHs^{-1}\cap K\leq K$. 
We obtain a homomorphism $\res^{sHs^{-1}}_{H_s}\alpha^s\co H_s\to \mathrm{GL}(W)$ by
restriction of $\alpha^s$ to $H_s$. 
The representation  $\res^G_K\ind_H^G\alpha$ is equivalent to the direct sum of twisted representations:
\begin{equation}\label{eq:res_ind}
\res^G_K \ind_H^G\alpha \,\,\cong \,\,
\bigoplus_{s\in S} \ind^K_{H_s}\res^{sHs^{-1}}_{H_s}\alpha^s\,.
\end{equation}

Equation \eqref{eq:res_ind} takes a simple form if $H=N=K$ is a normal subgroup of finite index of $G$. We obtain:
\begin{equation}\label{eq:res_ind_normal}
\res^G_N \ind_N^G\alpha \cong 
\bigoplus_{s\in S} \alpha^s
\end{equation}
where $S$ is a set of representatives of the $N$ cosets of $G$. 

In what follows we will make use of the following lemmas:
\begin{lemma}\label{lem:finite_index}
Let $G$ be a group, $H \leq G$ a subgroup of finite index, and 
$\rho\co G\to\gl(V)$ be a representation. 
If $\res^G_H\rho\co H\to \gl(V)$ is a semisimple,
then $\rho\co G\to\gl(V)$ is  semisimple.
\end{lemma}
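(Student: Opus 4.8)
The plan is to translate the statement into module language and then run a Maschke-style averaging argument over the finite index $k=[G:H]$. Recall that a finite-dimensional $\C[G]$-module is semisimple precisely when every $\C[G]$-submodule is a direct summand, i.e. admits a $\C[G]$-invariant complement. Writing $V=V_\rho$ for the $\C[G]$-module determined by $\rho$, it therefore suffices to show that an arbitrary $G$-stable subspace $W\subseteq V$ has a $G$-invariant complement. Since we work over $\C$ and $k$ is finite, the scalar $1/k$ is available, which is what makes the averaging go through.

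First I would fix such a $W$. As $W$ is in particular stable under $\res^G_H\rho$, and $\res^G_H\rho$ is semisimple by hypothesis, $W$ is a direct summand of $V$ as a $\C[H]$-module, so there is an $H$-equivariant projection $p\co V\to W$ with $p|_W=\id_W$. The crucial step is then to symmetrize $p$ over the coset space $G/H$: choosing representatives $g_1,\dots,g_k$ of $G/H$ and writing $\gamma\cdot v$ for $\rho(\gamma)v$, I define
\[
P(v)=\frac{1}{k}\sum_{i=1}^{k} g_i\cdot p\big(g_i^{-1}\cdot v\big).
\]

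The remaining verification is routine and breaks into four checks. First, $P$ does not depend on the chosen representatives: replacing $g_i$ by $g_ih$ with $h\in H$ leaves the $i$-th summand unchanged, because $g_ih\cdot p\big(h^{-1}g_i^{-1}\cdot v\big)=g_i\cdot p\big(g_i^{-1}\cdot v\big)$ by the $H$-equivariance of $p$. Second, $P$ maps into $W$, since $W$ is $G$-stable and $p$ maps into $W$, so each term lies in $g_i\cdot W=W$. Third, $P|_W=\id_W$: for $w\in W$ one has $g_i^{-1}\cdot w\in W$, hence $p\big(g_i^{-1}\cdot w\big)=g_i^{-1}\cdot w$, and the $k$ identical terms sum to $P(w)=w$. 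Fourth, and this is the only check that uses the group structure rather than mere linearity, $P$ is $G$-equivariant: for $\gamma\in G$ the substitution $g_i=\gamma g_i'$ again runs over a set of representatives, since left translation permutes the cosets of $H$, and invoking the independence of representatives established in the first check yields $P(\gamma\cdot v)=\gamma\cdot P(v)$.

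Once $P$ is a $G$-equivariant projection onto $W$, its kernel is a $\C[G]$-submodule with $V=W\oplus\ker P$, providing the desired invariant complement. As $W$ was arbitrary, $V_\rho$ is semisimple, i.e. $\rho$ is semisimple. The main obstacle, modest as it is, lies in the fourth check: one must keep track of how left multiplication by $\gamma$ permutes $G/H$ and lean on well-definedness to re-index the averaged sum. Everything else is the standard averaging idea, legitimate here because $\operatorname{char}\C=0$ renders $1/k$ meaningful.
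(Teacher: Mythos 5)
Your argument is correct and complete. The four verifications (independence of coset representatives, image in $W$, restriction to $W$ being the identity, and $G$-equivariance via re-indexing the averaged sum) are exactly the points that need checking, and each is handled properly; the only hypotheses used are that $[G:H]=k$ is finite and that $k$ is invertible in the ground field, which holds over $\C$. Note, however, that the paper does not present an argument at all here: it simply cites Theorem~1.5 of Wehrfritz's book on infinite linear groups. So your proof is not so much a different route as a route where the paper offers none --- you have reproved the cited result by the standard Maschke-style averaging of an $H$-equivariant projection over $G/H$. What the citation buys the authors is brevity; what your explicit argument buys is self-containedness and transparency about where the hypotheses enter: the statement (and your proof verbatim) holds for representations over any field whose characteristic does not divide $[G:H]$, which is more information than the bare reference conveys. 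One stylistic remark: when you reduce semisimplicity to ``every submodule is a direct summand,'' it is worth saying explicitly that this equivalence is being used for finite-dimensional modules, where it is standard; this is implicit in your first paragraph but deserves a word since it is the only structural fact about semisimplicity your argument relies on.
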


\begin{proof}
This is Theorem 1.5 in \cite{Weh73}.
\end{proof}
\begin{lemma}\label{lem:induced_is_semisimple}
Let $G$ be a group, $H \leq G$ a subgroup of finite index, and 
$\alpha\co H\to\gl(W)$ be a representation. 
If $\alpha$ is irreducible, then $\ind^G_H\alpha$ is  semisimple.
\end{lemma}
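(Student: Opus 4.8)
The plan is to deduce semisimplicity of $\ind_H^G\alpha$ from the semisimplicity of its restriction to a well-chosen finite-index \emph{normal} subgroup, so that Lemma~\ref{lem:finite_index} applies. First I would replace $H$ by its normal core $N=\bigcap_{g\in G}gHg^{-1}$, which is a normal subgroup of $G$ of finite index contained in $H$. By Lemma~\ref{lem:finite_index} it then suffices to prove that $\res^G_N\ind_H^G\alpha$ is semisimple.

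Next I would feed $K=N$ into the Mackey decomposition \eqref{eq:res_ind}. Because $N$ is normal in $G$ and contained in $H$, every term collapses: for a double-coset representative $s$ one has $H_s=sHs^{-1}\cap N=N$ (using $N=sNs^{-1}\leq sHs^{-1}$), so the inner induction $\ind^N_{H_s}$ is the identity and \eqref{eq:res_ind} becomes $\res^G_N\ind_H^G\alpha\cong\bigoplus_{s\in S}\res^{sHs^{-1}}_N\alpha^s$.

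The core of the argument is then to show each summand $\res^{sHs^{-1}}_N\alpha^s$ is semisimple. Since $\alpha$ is irreducible, each twist $\alpha^s\co sHs^{-1}\to\gl(W)$ is an irreducible representation, and $N$ is a normal subgroup of $sHs^{-1}$ (being normal in $G$). Clifford's theorem (see \cite{CR81}) then gives that the restriction of the irreducible representation $\alpha^s$ to the normal subgroup $N$ is semisimple. A finite direct sum of semisimple representations is semisimple, so $\res^G_N\ind_H^G\alpha$ is semisimple, and Lemma~\ref{lem:finite_index} finishes the proof.

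The main obstacle is the one ingredient I expect to import from outside the lemmas already stated, namely Clifford's theorem; once the reduction to the normal subgroup $N$ is in place and the Mackey formula has been specialized, everything else is formal. I would also take care to verify the two elementary facts that make the reduction go through: that the normal core $N$ has finite index in $G$ (it is the intersection of the finitely many distinct conjugates of the finite-index subgroup $H$), and that twisting preserves irreducibility, as already observed in the text preceding \eqref{eq:res_ind}.
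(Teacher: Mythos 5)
Your proposal is correct and follows essentially the same route as the paper: reduce to the normal core $N$, specialize the Mackey formula \eqref{eq:res_ind} to $K=N$ so the inner inductions collapse, invoke Clifford's theorem for the restriction of an irreducible representation to a finite-index normal subgroup, and conclude via Lemma~\ref{lem:finite_index}. The only cosmetic difference is that the paper applies Clifford once to $\alpha|_N$ and then twists the resulting simple summands, whereas you apply Clifford to each twist $\alpha^s$ separately; these are interchangeable.
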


\begin{proof}
We can choose a normal subgroup $N\unlhd G$ of finite index such that
$N\leq H$. More precisely,  we can take 
$$N=\bigcap_g gHg^{-1}$$ 
to be the \emph{normal core}  of $H$ in $G$.
We choose a set of representatives $S$ of the $(N,H)$ double cosets of $G$. 
In this case we obtain that $H_s = sHs^{-1}\cap N = s(H\cap N)s^{-1} =N$, and the double coset $NsH$ is equal to $sH$ since $N\subset H$ is normal.
Therefore, equation~\eqref{eq:res_ind} gives:
\[
\res^G_N\ind^G_H\alpha
\cong 
\bigoplus_{s\in S} \res^{sHs^{-1}}_N \alpha\,.
\]
Now, $\res^{sHs^{-1}}_N \alpha\co N \to \mathrm{GL}(W)$ is a \emph{twist}
of $\alpha|_N$ i.e. for all $g\in N$ we have
\[
 \res^{sHs^{-1}}_N \alpha(g) = \alpha(s^{-1} g s)= (\alpha|_N)^s(g)\,.
\]

By Clifford's theorem \cite[Theorem 1.7]{Weh73}, we obtain that $\alpha|_N$ is semisimple. We have that
$\alpha|_N = \alpha_1\oplus\cdots\oplus\alpha_k$ is a direct sum of simple representations.
Therefore,
\[
\big(\ind^G_H\alpha\big)\big|_N \cong 
\bigoplus_{s\in S} \alpha^s_1\oplus\cdots\oplus\alpha^s_k
\]
is the direct sum of irreducible representations. This proves that
$\big(\ind^G_H\alpha\big)\big|_N$ is semisimple, and
it follows from Lemma~\ref{lem:finite_index} that
$\ind^G_H\alpha$ is semisimple.
\end{proof}

\begin{corollary}\label{cor:induced_is_semisimple}
Let $G$ be a group,and let $N \lhd G$ a normal subgroup of finite index.
If
$\alpha\co N\to\SL(V)$ is irreducible, then $\ind^G_N (\alpha)$ is semisimple.

Moreover, if $\ind^G_N (\alpha)\cong\rho_1 \oplus\cdots\oplus\rho_l$ is a decomposition of 
$\ind^G_N (\alpha)$ into irreducible representations $\rho_j\co G\to \SL(V_j)$, then 
$\dim V$ divides $\dim V_j$ and hence
\[
\dim V\,\,\leq \,\,\dim V_j \,\,\leq \,\,\dim(V)\cdot [G:N]\,.
\]
\end{corollary}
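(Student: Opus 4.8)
The plan is to prove Corollary~\ref{cor:induced_is_semisimple} by specializing Lemma~\ref{lem:induced_is_semisimple} and then analyzing the decomposition of $\ind^G_N(\alpha)$ via the normal-subgroup formula \eqref{eq:res_ind_normal}. Since $N$ is already normal of finite index and $\alpha$ is irreducible, the semisimplicity of $\ind^G_N(\alpha)$ is immediate from Lemma~\ref{lem:induced_is_semisimple}. So the real content is the divisibility and sandwich estimate on $\dim V_j$. First I would write $\ind^G_N(\alpha)\cong\rho_1\oplus\cdots\oplus\rho_l$ with each $\rho_j\co G\to\SL(V_j)$ irreducible, and restrict everything to $N$. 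By \eqref{eq:res_ind_normal} we have $\res^G_N\ind^G_N\alpha\cong\bigoplus_{s\in S}\alpha^s$, a direct sum of $[G:N]$ irreducible representations of $N$, each of dimension $\dim V$ (recall twisting preserves irreducibility and dimension).

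Next I would restrict each $\rho_j$ to $N$. By Clifford's theorem (\cite[Theorem~1.7]{Weh73}), $\res^G_N\rho_j$ is semisimple, and moreover its irreducible constituents are all $G$-conjugate to one another; in particular they all have the same dimension, and since they must appear among the constituents of $\res^G_N\ind^G_N\alpha$, that common dimension is $\dim V$. Thus $\res^G_N\rho_j$ is a direct sum of, say, $e_j$ copies (up to conjugacy) of irreducibles each of dimension $\dim V$, whence $\dim V_j=e_j\cdot\dim V$. This gives the divisibility $\dim V\mid\dim V_j$ and the lower bound $\dim V\leq\dim V_j$ at once.

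For the upper bound I would count dimensions. On one hand $\dim\bigl(\res^G_N\ind^G_N\alpha\bigr)=[G:N]\cdot\dim V$. On the other hand, summing over the constituents, $\dim\bigl(\res^G_N\ind^G_N\alpha\bigr)=\sum_{j=1}^l\dim V_j=\sum_{j=1}^l e_j\dim V$, so $\sum_j e_j=[G:N]$. Since each $e_j\geq 1$, we get $e_j\leq[G:N]$ for every $j$, and therefore $\dim V_j=e_j\dim V\leq[G:N]\cdot\dim V=\dim(V)\cdot[G:N]$, completing the displayed chain of inequalities.

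The main obstacle is justifying cleanly that every irreducible $N$-constituent of $\res^G_N\rho_j$ has dimension exactly $\dim V$. This requires invoking the full strength of Clifford's theorem: not merely that $\res^G_N\rho_j$ is semisimple, but that $G$ permutes its isotypic components transitively, forcing all constituents to be $G$-conjugate and hence of equal dimension. One must also argue that these constituents genuinely occur among the $\alpha^s$, which follows from Frobenius reciprocity (each irreducible $\rho_j$ summand of the induced representation restricts to constituents appearing in $\res^G_N\ind^G_N\alpha$). I would state this carefully rather than leave it implicit, since the divisibility claim hinges entirely on it.
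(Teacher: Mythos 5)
Your proof is correct and follows essentially the same route as the paper: restrict to $N$, use \eqref{eq:res_ind_normal} to identify $\bigoplus_{s\in S}\alpha^s\cong\rho_1|_N\oplus\cdots\oplus\rho_l|_N$, and conclude that every simple constituent of $\rho_j|_N$ is some $\alpha^s$ of dimension $\dim V$, giving the divisibility and the bounds. Your only deviations are cosmetic — deriving semisimplicity from Lemma~\ref{lem:induced_is_semisimple} rather than directly from \eqref{eq:res_ind_normal} and Lemma~\ref{lem:finite_index}, and spelling out the Clifford/counting details the paper leaves implicit.
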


\begin{proof}
The first part follows directly from equation \eqref{eq:res_ind_normal} and Lemma~\ref{lem:finite_index} since $\alpha^s$ is irreducible for all $s\in G$.
Notice that $S$ is now a set of representatives of the cosets $G/N$.
Moreover, we obtain
\[
\bigoplus_{s\in S} \alpha^s\,\,\cong\,\,\res_N^G \ind^G_N \alpha\,\,\cong\,\, \rho_1|_N \oplus\cdots\oplus \rho_l|_N\,.
\]
If $\rho_j|_N$ is irreducible, then it must be isomorphic to one of the twisted representations $\alpha^s$. Otherwise $\rho_j|_N$ is isomorphic to a direct sum of twisted representations $\alpha^s$, $s\in S$, and hence $\dim V_j$ is a multiple of $\dim V$.
\end{proof}

\begin{lemma} \label{lem:core}
Let $G$ be a group and let $H\leq G$ be a finite index subgroup. 

If there exists a surjective homomorphism $\varphi\co H\twoheadrightarrow F_2$ onto a free group of rank two, then there exists a normal subgroup $N\unlhd G$ of finite index 
such that $N\leq H$, and $\varphi(N)\leq F_2$ is a free group of finite rank $r\geq 2$.
\end{lemma}

\begin{proof}
Let $N$ be the normal core of $H$ i.e.\ 
$N =\bigcap_{g\in G} g H g^{-1}\unlhd G$.
The normal subgroup is a finite index subgroup of $G$, and $N\leq H$.
Now, $H\geq \varphi^{-1}(\varphi(N))\geq N$, and therefore $\varphi^{-1}(\varphi(N))$ is also of finite index in $H$ (and hence in $G$).
Hence, $\varphi(N)\unlhd F_2$ is of finite index, and $\varphi(N)$ is a free group of rank $r = [F_2:\varphi (N) ] +1 \geq 2$.
\end{proof}

The following theorem implies in particular Theorem~\ref{mainthm2} from the introduction.

\begin{theorem}
Let $K\subset S^3$ be a non-trivial knot. Then there exists $k\in\N$ such that for all 
even $m\in\N$ there exists $p\in\N$ such that $m\leq p\leq mk$, and
\[
\dim X^\mathit{irr}(\pi_K,\sl(p,\C)) \geq \frac{m^2-k}{k}\,.
\]
In particular, for $m$ even with $m>k^2$ there exists $p\in\N$ such that $m\leq p < m\sqrt{m}$, and
\[
\dim X^\mathit{irr}(\pi_K,\sl(p,\C)) \geq \frac{m^2-k}{k} > km-1 \geq p-1\,.
\]
\end{theorem}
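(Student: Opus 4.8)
The plan is to combine the induction construction of Lemma~\ref{lem:induced-representations} with the decomposition result of Corollary~\ref{cor:induced_is_semisimple}, and then to harvest a high-dimensional family of \emph{irreducible} characters from the induced representation by a pigeonhole argument over its constituents. First I would fix the subgroup. By Cooper--Long--Reid there is a finite-index $H\leq \pi_K$ with an epimorphism $H\twoheadrightarrow F_2$, and Lemma~\ref{lem:core} then produces a normal subgroup $N\unlhd \pi_K$ of finite index $k:=[\pi_K:N]$ together with an epimorphism $\varphi\co N\twoheadrightarrow F_r$ onto a free group of rank $r\geq 2$. This $k$ is the constant asserted in the statement; it depends only on $K$.

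Next, for even $m$ I would build a large family of irreducible representations of $N$. Pulling back along $\varphi$ and combining Lemma~\ref{lem:epimorphism-representation-varieties} with Example~\ref{ex:free}, the image of $X(F_r,\sl(m,\C))$ gives a subvariety $Y\subseteq X(N,\sl(m,\C))$ of dimension at least $(r-1)(m^2-1)\geq m^2-1$. Since a generic representation of $F_r$ (with $r\geq 2$, $m\geq 2$) is irreducible and pullback along a surjection preserves irreducibility, I may arrange $Y\subseteq X^\mathit{irr}(N,\sl(m,\C))$. Now induce: for each $\alpha\in Y$ the representation $\ind_N^{\pi_K}\alpha$ is, by Corollary~\ref{cor:induced_is_semisimple}, semisimple, and its irreducible constituents $\rho_j\co \pi_K\to \SL(V_j)$ have dimensions $p_j=\dim V_j$ that are multiples of $m$ with $m\leq p_j\leq mk$; as they sum to $mk$ there are at most $k$ of them.

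The core step is to pass from the reducible induced characters back to irreducible ones while controlling dimension. I would stratify $Y$ by the finite list of decomposition types and restrict to the dense open stratum $Y'$ on which the constituent dimensions $p_1,\dots,p_l$ (with $l\leq k$) are constant; this yields a constituent-character map $Y'\to \prod_{j=1}^{l} X^\mathit{irr}(\pi_K,\sl(p_j,\C))$, $\alpha\mapsto (\chi_{\rho_j})_j$. Summing the entries of the target recovers $\bar\iota(\chi_\alpha)=\chi_{\ind_N^{\pi_K}\alpha}$, and since $\bar\iota$ has finite fibers by Lemma~\ref{lem:induced-representations}, each fiber of the constituent-character map lies in a fiber of $\bar\iota$ and is therefore finite. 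Hence its image has dimension $\dim Y'\geq m^2-1$, and because the dimension of a subvariety of a product is bounded by the sum of the dimensions of its projections, some factor $j_0$ satisfies $\dim X^\mathit{irr}(\pi_K,\sl(p_{j_0},\C))\geq (m^2-1)/l\geq (m^2-1)/k\geq (m^2-k)/k$. Setting $p:=p_{j_0}$ gives $m\leq p\leq mk$ and the asserted bound.

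Finally, the last clause is pure bookkeeping: if $m>k^2$ then $\sqrt m>k$, so $p\leq mk<m\sqrt m$, and $(m^2-k)/k=m^2/k-1>km-1\geq p-1$. The step I expect to be the main obstacle is making the constituent-character map genuinely regular with finite fibers: the decomposition of a semisimple representation into irreducibles is not canonical (isomorphic factors may be permuted, and the list of constituent dimensions could a priori jump), so some care with constructibility of the stratification and possibly with a finite relabeling cover is needed to guarantee that the constituents really trace out a subvariety of $\prod_j X^\mathit{irr}(\pi_K,\sl(p_j,\C))$ of the expected dimension. Everything else is dimension counting and the pigeonhole over the at most $k$ constituents.
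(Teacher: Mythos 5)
Your overall strategy is the one the paper uses: pass to a finite-index normal subgroup $N\unlhd\pi_K$ with a free quotient, pull back an $(m^2-1)$-dimensional family of irreducible $\SL(m,\C)$-characters of $N$, induce up to $\pi_K$, decompose the (semisimple) induced representation via Corollary~\ref{cor:induced_is_semisimple}, and pigeonhole over the at most $k$ constituents. The numerology at the end is also correct. But the step you yourself flag as the main obstacle --- the ``constituent-character map'' $Y'\to\prod_{j}X^\mathit{irr}(\pi_K,\sl(p_j,\C))$ with finite fibers --- contains a genuine gap, and not only for the reason you give (permutation of factors, jumping decomposition type). The irreducible constituents $\rho_j$ of an $\SL(km,\C)$-representation are a priori only $\GL(p_j,\C)$-representations whose determinant characters multiply to $1$; they do not individually define points of $X^\mathit{irr}(\pi_K,\sl(p_j,\C))$. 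One can normalize each $\rho_j$ into $\SL(p_j,\C)$ by twisting with an abelian character $\lambda_j^{\varphi}$ (using $H_1(E_K)\cong\Z$), but then your finiteness argument breaks: two tuples of normalized constituent characters can agree while the original induced characters differ by the choice of the twists $\lambda_j$, so the fibers of the normalized map can have dimension up to $l-1$. Your claimed bound $(m^2-1)/l$ is therefore too strong as stated; the correct count is $(m^2-l)/l$, which still suffices for the theorem's bound $(m^2-k)/k$ since $l\leq k$, but the intermediate claim needs repair.

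The paper resolves exactly this point by running the map in the opposite direction, which also disposes of the regularity/constructibility worries you raise. For each admissible tuple $(p_1,\dots,p_l)$ it defines a regular map $\Phi_{(p_1,\dots,p_l)}$ from $R(\pi_K,\sl(p_1,\C))\times\cdots\times R(\pi_K,\sl(p_l,\C))\times(\C^*)^{l-1}$ into $R(\pi_K,\sl(km,\C))$, taking a direct sum after twisting the summands by abelian characters $\lambda_i^{\varphi}$ so that the total determinant is $1$; the extra $(\C^*)^{l-1}$ factor is precisely the determinant ambiguity described above, and it accounts for the $+\,(l-1)$ in the dimension formula $\dim D_{(p_1,\dots,p_l)}=\sum_j\dim X(\pi_K,\sl(p_j,\C))+l-1$. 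One then observes that the reducible locus $D$ is \emph{covered} by the finitely many constructible images $D_{(p_1,\dots,p_l)}$, so some image has dimension at least $m^2-1$, giving $\sum_j\dim X(\pi_K,\sl(p_j,\C))\geq m^2-l$ and the pigeonhole bound $(m^2-l)/l\geq(m^2-k)/k$. You never need the constituent map to be a morphism, only that every semisimple character lies in some image. If you want to keep your direction of argument, you must (i) record the unnormalized $\GL(p_j,\C)$-characters (so that summation recovers $\bar\iota(\chi_\alpha)$ and fibers are finite) and then relate $\dim X^\mathit{irr}(\pi_K,\GL(p_j,\C))$ to $\dim X^\mathit{irr}(\pi_K,\sl(p_j,\C))$ via the determinant twist, or (ii) accept the $(l-1)$-dimensional fibers after normalization; either way you land on the paper's weaker but sufficient estimate.
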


\begin{proof}
By Lemma~\ref{lem:core} there exists a finite index normal subgroup $N\unlhd \pi_K$ of the knot group $\pi_K$, and an epimorphism $\psi\co N\twoheadrightarrow F_2 $. We put $k=[\pi_K :N]$.

For all even $m\in\N$, we obtain a regular map 
$\psi^*\co X^\mathit{irr}(F_2,\sl(m,\C))\to X(N,\sl(m,\C))$ and we let denote 
$C\subset X(N,\sl(m,\C))$ the image of $\psi^*$. By Chevalley's theorem, the set 
$C\subset X(N,\sl(m,\C))$ is  constructible (see \cite[10.2]{Borel}). Again, by Chevalley's theorem the image $D := \bar\iota(C)$,
$\bar\iota\co X(N,\sl(m,\C)) \to X(\pi_K,\sl(km,\C))$, is also a constructible set.
Notice that $\dim C = \dim D = m^2-1$ since $\phi^*$ is an embedding and $\bar \iota$ has finite fibers.

If $D$ contains a character of an irreducible representation, then 
$D\cap X^\mathit{irr}(\pi_K,\sl(km,\C))$ contains a
Zariski-open subset of $\bar D$ which is of dimension $m^2-1\geq (m^2 -k)/k$. 
Hence the conclusion of the theorem is satisfied for $p=km$.

If $D$ does not contain irreducible representations, then 
$D\subset X^\mathit{red}(\pi_K,\sl(km,\C))$.
In this case we can choose for a given $\chi\in D$ a semisimple representation $\rho$ such that $\chi_\rho=\chi$. Now, we follow the argument in the proof of Corollary~\ref{cor:induced_is_semisimple} and we obtain
 $\rho\sim \rho_1\oplus\cdots\oplus\rho_l$ where $\rho_j\co\pi_K\to\GL(p_j,\C)$ and
 $m\leq p_j < km$.
For the $l$-tuple 
$(p_1,\ldots,p_l)$
we consider the regular map
\[
\Phi_{(p_1,\ldots,p_l)}\co R(\pi_K,\sl(p_1,\C))\times\cdots\times R(\pi_K,\sl(p_l,\C))\times (\C^*)^{l-1}
\to R(\pi_K,\sl(km,\C))
\]
given by
\[ 
\Phi_{(p_1,\ldots,p_l)}(\rho_1,\ldots,\rho_l,\lambda_1,\ldots,\lambda_{l-1}) =
\bigoplus_{i=1}^{l-1} (\rho_i \otimes \lambda_i^{p_l\varphi}) \oplus 
\big(\rho_l \otimes (\lambda_1^{-p_1}\cdots\lambda_{l-1}^{-p_{l-1}})^\varphi\big)\,.
\]
The map $\Phi_{(p_1,\ldots,p_l)}$ induces a map between the character varieties
\[
\bar\Phi_{(p_1,\ldots,p_l)}\co X(\pi_K,\sl(p_1,\C))\times\cdots\times X(\pi_K,\sl(p_l,\C))\times (\C^*)^{l-1}
\to X^\mathit{red}(\pi_K,\sl(km,\C))\,.
\]

The restriction of $\bar\Phi_{(p_1,\ldots,p_l)}$ to 
\[
X^\mathit{irr}(\pi_K,\sl(p_1,\C))\times\cdots\times X^\mathit{irr}(\pi_K,\sl(p_l,\C))\times (\C^*)^{l-1}
 \]
 has finite fibers and we denote the image par $D_{(p_1,\ldots,p_l)}$. Again, by Chevalley's theorem, the image $D_{(p_1,\ldots,p_l)}\subset X^\mathit{red}(\pi_K,\sl(km,\C))$ is a constructible set, and
 \begin{equation}\label{dimd}
 \dim D_{(p_1,\ldots,p_l)} = \sum_{j=1}^l \dim X(\pi_K,\sl(p_j,\C)) + l-1\,.
 \end{equation}
 By Corollary~\ref{cor:induced_is_semisimple} $D$ is covered by finitely many sets of the form
 $D_{(p_1,\ldots,p_l)}$. Since $\dim D = m^2-1$ there must be at least one set $D_{(p_1,\ldots,p_l)}$ of dimension at least $m^2-1$. If we apply (\ref{dimd}) to this choice we obtain that 
 \[
\sum_{j=1}^l \dim X(\pi_K,\sl(p_j,\C)) \geq m^2 - l\,.
\]
In particular there exists a $j$ such that the corresponding summand is greater or equal than $\frac{m^2-l}{l}$. Note that from $m\leq p_j\leq mk$ for $j=1,\dots,l$ and $p_1+\dots+p_l=mk$ it follows that  $l\leq k$ which in turn implies that  $\frac{m^2-l}{l}\geq \frac{m^2-k}{k}$. Summarizing  we see that
\[
\dim X^\mathit{irr}(\pi_K,\sl(p_j,\C)) \,\geq\,\frac{m^2 - k}{k}\,.
\]
This concludes the proof of the first statement of the theorem.

The second statement follows from the first statement using some elementary algebraic inequalities.
\end{proof}

\section{The character variety of the figure-eight knot}\label{sec:irreducible}

The aim of this section is to prove Corollary~\ref{cor:figure-8}.
In order to study the character variety of the figure-eight knot we have to address the question under which conditions the induced representation is irreducible. This is a quite classical subject and we follow Serre's exposition in \cite{Ser77}.

\begin{lemma}[Schur's lemma and its converse]\label{lem:semisimple_vers_simple}
Let $\rho\co G\to\mathrm{GL}(W)$ be a representation.
If $\rho$ is simple, then $\mathrm{Hom}_{\C[G]}(W,W)\cong\C$. 
Conversely,  if $\rho\co G\to\GL(W)$ is semisimple
and  $\mathrm{Hom}_{\C[G]}(W,W)\cong\C$, then $\rho$ is simple.
\end{lemma}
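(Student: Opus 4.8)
The plan is to give the two classical arguments that make up Schur's lemma over the algebraically closed field $\C$ and its converse for semisimple modules. Throughout I would regard $W$ as the $\C[G]$-module $W_\rho$, and use the standing observation that the kernel and image of any $f\in\mathrm{Hom}_{\C[G]}(W,W)$ are $\C[G]$-submodules of $W$ (equivalently $\rho(G)$-stable subspaces), since $f$ commutes with the $G$-action.

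For the first implication I would assume $\rho$ is simple and fix an arbitrary $f\in\mathrm{Hom}_{\C[G]}(W,W)$. Since $\C$ is algebraically closed and $W$ is finite dimensional, $f$ has an eigenvalue $\lambda\in\C$. The endomorphism $f-\lambda\,\mathrm{id}$ again lies in $\mathrm{Hom}_{\C[G]}(W,W)$, and its kernel is a nonzero $\C[G]$-submodule of $W$. As $W$ is simple this forces $\ker(f-\lambda\,\mathrm{id})=W$, i.e.\ $f=\lambda\,\mathrm{id}$. Hence every $\C[G]$-endomorphism of $W$ is a scalar, which gives $\mathrm{Hom}_{\C[G]}(W,W)=\C\cdot\mathrm{id}\cong\C$.

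For the converse I would argue by contraposition. Suppose $\rho$ is semisimple but \emph{not} simple. Writing $W=W_1\oplus\cdots\oplus W_k$ as a direct sum of simple $\C[G]$-submodules, the failure of simplicity forces $k\geq 2$, so $W_1$ is a proper nonzero $\C[G]$-submodule admitting the $G$-stable complement $W_2\oplus\cdots\oplus W_k$. The projection $p\co W\to W$ onto $W_1$ along this complement is then $G$-equivariant, hence $p\in\mathrm{Hom}_{\C[G]}(W,W)$. Since $p$ is idempotent with both eigenvalues $0$ and $1$ occurring, it is not a scalar multiple of the identity, so $\mathrm{id}$ and $p$ are linearly independent. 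Therefore $\dim_\C\mathrm{Hom}_{\C[G]}(W,W)\geq 2$, and in particular $\mathrm{Hom}_{\C[G]}(W,W)\not\cong\C$, which is exactly the contrapositive of the claimed implication.

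There is no serious obstacle in this argument; the only points I would be careful to record explicitly are the two hypotheses that make each direction go through. In the forward direction it is essential that $\C$ be algebraically closed, so that $f$ is guaranteed an eigenvalue. In the converse it is essential that $\rho$ be semisimple, since this is precisely what furnishes a $G$-stable complement to the simple summand $W_1$ and hence the equivariant idempotent $p$; the splitting argument has no analogue for a general non-semisimple representation, and indeed the converse genuinely fails without this assumption.
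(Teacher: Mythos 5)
Your proof is correct and complete: the eigenvalue argument for the forward direction (valid since $W$ is finite dimensional over the algebraically closed field $\C$) and the contrapositive argument via the equivariant projection onto a simple summand for the converse are exactly the standard arguments. The paper itself does not write out a proof --- it cites \cite[\S 27]{CR62} for Schur's lemma, declares the converse ``an easy exercise,'' and, like you, flags that semisimplicity is essential for the converse --- so your proposal simply supplies in full the argument the paper delegates to the literature.
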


\begin{proof}
A proof of Schur's lemma can be found in \cite[\S 27]{CR62}, and its converse is an easy exercise. Notice that the hypothesis \emph{semisimple} is essential for the converse of Schur's lemma (see \cite[p.~189]{CR62}).
\end{proof}

%
%

\subsection{The adjoint isomorphism}
%
Let $Q$, $R$ be rings and let $_QS$, $_R T_Q$, $_R U$ be modules 
(here the location of the indices indicates whether these are left or right module structures).
Then $\mathrm{Hom}_R(T,U)$ is a left $Q$-module via $qf(v) = f(v\,q)$ for all $v\in V$, and
\begin{equation}\label{eq:adjoint}
\mathrm{Hom}_Q\big(S,\mathrm{Hom}_R(T,U)\big) \cong
\mathrm{Hom}_R\big(T\otimes_Q S,U\big)
\end{equation}
(see \cite[Theorem~2.76]{Rot09}).

\begin{lemma} Let $G$ be a group and $H\leq G$ a subgroup of finite index.
 For each
$\C[H]$-left module $W$ and a $\C[G]$-left module $V$ we obtain
\begin{align}\label{eq:adjoint1}
\mathrm{Hom}_{\C[G]}\big(V,\ind_H^G  W \big) &\cong
\mathrm{Hom}_{\C[H]}\big(\res_H^G  V, W\big)\,,
\intertext{and}
\label{eq:adjoint2}
\mathrm{Hom}_{\C[H]}\big(W,\res_H^GV \big) &\cong
\mathrm{Hom}_{\C[G]}\big(\ind^G_H W , V\big)\,.
\end{align}
\end{lemma}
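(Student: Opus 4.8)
The plan is to derive both isomorphisms from the tensor--hom adjunction \eqref{eq:adjoint}, using throughout that $\ind_H^G W = \C[G]\otimes_{\C[H]} W$ with $\C[G]$ regarded as a $(\C[G],\C[H])$-bimodule. The second isomorphism \eqref{eq:adjoint2} is an immediate application of \eqref{eq:adjoint}, while the first \eqref{eq:adjoint1} needs the additional input that, because $[G:H]$ is finite, induction agrees with coinduction.

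For \eqref{eq:adjoint2} I would apply \eqref{eq:adjoint} with $Q=\C[H]$, $R=\C[G]$, $S=W$, the bimodule $T={}_{\C[G]}\C[G]_{\C[H]}$, and $U=V$. Then $T\otimes_Q S=\C[G]\otimes_{\C[H]}W=\ind_H^G W$, and the adjunction reads
\[
\mathrm{Hom}_{\C[H]}\!\big(W,\mathrm{Hom}_{\C[G]}(\C[G],V)\big)\cong \mathrm{Hom}_{\C[G]}\!\big(\ind_H^G W,V\big).
\]
It then remains to identify $\mathrm{Hom}_{\C[G]}(\C[G],V)\cong \res_H^G V$ as left $\C[H]$-modules via $f\mapsto f(1)$; here one checks that the left $\C[H]$-action $qf=(t\mapsto f(tq))$ coming from the right $\C[H]$-structure on $T$ corresponds under $f\mapsto f(1)$ exactly to the restricted $G$-action on $V$. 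This yields \eqref{eq:adjoint2}.

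For \eqref{eq:adjoint1} I would first replace $\ind_H^G W$ by the coinduced module $\op{coind}_H^G W:=\mathrm{Hom}_{\C[H]}(\C[G],W)$, using a natural $\C[G]$-module isomorphism $\ind_H^G W\cong\op{coind}_H^G W$. Granting this, apply \eqref{eq:adjoint} with $Q=\C[G]$, $S=V$, $R=\C[H]$, the bimodule $T={}_{\C[H]}\C[G]_{\C[G]}$, and $U=W$; since $T\otimes_Q S=\C[G]\otimes_{\C[G]}V\cong V$ carries precisely the restricted left $\C[H]$-action, the adjunction gives
\[
\mathrm{Hom}_{\C[G]}\!\big(V,\op{coind}_H^G W\big)\cong \mathrm{Hom}_{\C[H]}\!\big(\res_H^G V,W\big),
\]
which combines with $\ind\cong\op{coind}$ to produce \eqref{eq:adjoint1}.

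The main obstacle is the isomorphism $\ind_H^G W\cong\op{coind}_H^G W$, which is exactly the step where finite index is essential. I would establish it by choosing coset representatives $g_1,\dots,g_k$ for $G/H$: since $\C[G]$ is then free of finite rank $k=[G:H]$ as a left \emph{and} as a right $\C[H]$-module, both $\C[G]\otimes_{\C[H]}W$ and $\mathrm{Hom}_{\C[H]}(\C[G],W)$ are identified with $W^{k}$, and one verifies that the resulting comparison map is $\C[G]$-linear and independent of the chosen representatives (the classical Nakayama isomorphism); alternatively this is standard and may be quoted from \cite{CR62} or \cite{Bro82}. Everything else reduces to bookkeeping of the left/right module structures under \eqref{eq:adjoint}, which is routine.
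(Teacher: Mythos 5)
Your proposal is correct and follows essentially the same route as the paper: the same two instantiations of the tensor--hom adjunction \eqref{eq:adjoint}, the identification $\mathrm{Hom}_{\C[G]}(\C[G],V)\cong\res_H^G V$, and the finite-index isomorphism $\ind_H^G W\cong\mathrm{coind}_H^G W$ for \eqref{eq:adjoint1}. The only difference is that you sketch a proof of the ind/coind comparison via coset representatives, whereas the paper simply cites it from Brown's book.
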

\begin{proof}
For proving \eqref{eq:adjoint1} we apply \eqref{eq:adjoint} with 
$Q=\C[G]$, $R=\C[H]$, $S=V$, $T=\C[G]$, and $U=W$:
\[ 
\mathrm{Hom}_{\C[G]}\big(V,\mathrm{Hom}_{\C[H]}(\C[G], W)\big) \cong
\mathrm{Hom}_{\C[H]}\big(\C[G]\otimes_{\C[G]} V, W\big)\,.
\]
Since $H\leq G$ is of finite index we obtain that the coiduced module
$\mathrm{coind}_H^G ( W) := \mathrm{Hom}_{\C[H]}(\C[G], W)$ and the induced module
$\ind_H^G ( W)$ are isomorphic as $\C[G]$-left modules (see \cite[III\,(5.9)]{Bro82}).
Moreover, $\res_H^G (U)$ and $\C[G]\otimes_{\C[G]} U$ are isomorphic as left $\C[H]$-modules,
and \eqref{eq:adjoint1} follows.

In oder to prove \eqref{eq:adjoint2} we apply \eqref{eq:adjoint} with 
$Q=\C[H]$, $R=\C[G]$, $S = W$, $T=\C[G]$ and $U = V$:
\[
\mathrm{Hom}_{\C[H]}\big(W,\mathrm{Hom}_{\C[G]}(\C[G] , V) \big) \cong
\mathrm{Hom}_{\C[G]}\big(\C[G]\otimes_{\C[H]} W , V\big)\,.
\]
The left $\C[H]$-module $\mathrm{Hom}_{\C[G]}(\C[G] , V)$ is isomorphic to $\res_H^G (V)$, and hence \eqref{eq:adjoint2} follows.
\end{proof}

\subsection{Mackey's irreducibility criterion}
Let $H\leq G$ be a subgroup of finite index, and let 
$\alpha\co H\to \mathrm{GL}(W)$ be a representation.  
For $s\in G$ we obtain the conjugate representation
$\alpha^s\co sHs^{-1}\to \mathrm{GL}(W)$. We define  $H_s := sHs^{-1}\cap H$,
and we set $W_s := \res^{sHs^{-1}}_{H_s} W_{\alpha^s}$.
In what follows we call two semisimple representations $V$ and $V'$ of $G$ \emph{disjoint} if
$\mathrm{Hom}_{\C[G]}(V,V') = 0$
\begin{lemma}[Mackey's criterion]\label{lem:irred}
Let $H\leq G$ be a subgroup of finite index.  We suppose that 
$\alpha\co H\to \mathrm{GL}(W)$ is an irreducible representation. Then
$\ind^G_H\alpha$ is irreducible if and only if for all $s\in G-H$ the $\C[H_s]$-modules
$W_s$ and $\res^H_{H_s} W$ are disjoint.
\end{lemma}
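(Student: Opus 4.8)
The plan is to detect irreducibility of $\ind_H^G\alpha$ through its endomorphism algebra, and to compute that algebra by combining the adjoint isomorphisms~\eqref{eq:adjoint1} and~\eqref{eq:adjoint2} with Mackey's decomposition formula~\eqref{eq:res_ind}. First, since $\alpha$ is irreducible, Lemma~\ref{lem:induced_is_semisimple} guarantees that $\ind_H^G\alpha$ is semisimple. Hence by the converse of Schur's lemma (Lemma~\ref{lem:semisimple_vers_simple}), the representation $\ind_H^G\alpha$ is irreducible if and only if $\mathrm{Hom}_{\C[G]}\big(\ind_H^G W,\ind_H^G W\big)\cong\C$. Thus the whole problem reduces to computing this one-dimensionality.

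The computation proceeds in three moves. Applying the adjoint isomorphism~\eqref{eq:adjoint2} with $V=\ind_H^G W$ gives
\[
\mathrm{Hom}_{\C[G]}\big(\ind_H^G W,\ind_H^G W\big)\,\cong\,\mathrm{Hom}_{\C[H]}\big(W,\res_H^G\ind_H^G W\big)\,.
\]
Next I would apply Mackey's formula~\eqref{eq:res_ind} in the case $K=H$, which decomposes the restriction as $\res_H^G\ind_H^G W\cong\bigoplus_{s\in S}\ind_{H_s}^H W_s$, where $S$ is a set of representatives of the $(H,H)$-double cosets of $G$, the subgroup is $H_s=sHs^{-1}\cap H$, and $W_s=\res^{sHs^{-1}}_{H_s}W_{\alpha^s}$. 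Distributing $\mathrm{Hom}$ over the direct sum and then applying the other adjoint isomorphism~\eqref{eq:adjoint1} (with $G$ replaced by $H$ and $H$ by $H_s$) to each summand yields
\[
\mathrm{Hom}_{\C[G]}\big(\ind_H^G W,\ind_H^G W\big)\,\cong\,\bigoplus_{s\in S}\mathrm{Hom}_{\C[H_s]}\big(\res_{H_s}^H W,W_s\big)\,.
\]

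Finally I would single out the trivial double coset, namely the one represented by $s=1$, for which $H_1=H$, $W_1=W$, and $\res_H^H W=W$; by Schur's lemma this summand equals $\mathrm{Hom}_{\C[H]}(W,W)\cong\C$. Consequently the total dimension exceeds $1$ exactly when some summand attached to a nontrivial double coset is nonzero. Therefore $\ind_H^G\alpha$ is irreducible if and only if $\mathrm{Hom}_{\C[H_s]}\big(\res_{H_s}^H W,W_s\big)=0$ for every nontrivial double coset, which is precisely the disjointness of $W_s$ and $\res_{H_s}^H W$.

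The step requiring the most care is reconciling the two formulations of the condition: the computation naturally produces one disjointness requirement per nontrivial $(H,H)$-double coset, whereas the statement asks for disjointness for all $s\in G-H$. The reconciliation is that the space $\mathrm{Hom}_{\C[H_s]}\big(\res_{H_s}^H W,W_s\big)$ depends only on the double coset $HsH$: replacing $s$ by $h_1 s h_2$ with $h_1,h_2\in H$ conjugates $H_s$ and transports $W_s$ and $\res_{H_s}^H W$ by a compatible isomorphism, so the disjointness is unchanged. Since every element of $G-H$ lies in some nontrivial double coset, checking the condition for all $s\in G-H$ is equivalent to checking it on a single representative of each nontrivial double coset, and the two formulations agree.
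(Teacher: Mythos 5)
Your proposal is correct and follows essentially the same route as the paper's proof: semisimplicity of the induced representation via Lemma~\ref{lem:induced_is_semisimple}, the converse of Schur's lemma, then the chain of isomorphisms \eqref{eq:adjoint2}, \eqref{eq:res_ind} with $K=H$, and \eqref{eq:adjoint1}, isolating the trivial double coset. Your closing remark on the double-coset invariance of the disjointness condition is a point the paper leaves implicit, but it does not change the argument.
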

\begin{proof} It follows from Lemma~\ref{lem:induced_is_semisimple} that 
$\ind^G_H\alpha$ is semisimple. Therefore, by  
Lemma~\ref{lem:semisimple_vers_simple} it follows that  $\ind^G_H\alpha$ is irreducible if and only if  
$\mathrm{Hom}_{\C[G]}(\ind^G_H W,\ind^G_H W) \cong\C$. 
We choose a system $S$ of the $(H,H)$ double cosets of $G$. Then we obtain that 
\begin{align*}
\mathrm{Hom}_{\C[G]}(\ind^G_H W,\ind^G_H W)   & \cong
\mathrm{Hom}_{\C[H]}(W,\res^G_H \ind^G_H W), & \text{ (by \eqref{eq:adjoint2})}\\
&\cong  \bigoplus_{s\in S} 
\mathrm{Hom}_{\C[H]} (W,\ind^{H}_{H_s} W_s), & \text{ (by \eqref{eq:res_ind} for $K=H$)}\\
&\cong  \bigoplus_{s\in S} 
\mathrm{Hom}_{\C[H_s]} (\res_{H_s}^H W, W_s), & \text{ (by \eqref{eq:adjoint1}).}
\end{align*}
Now, if $s\in H$, then $H_s=H$ and $W_s \cong W$, and $\mathrm{Hom}_{H} ( W, W) \cong \C$ since $W$ is an irreducible $\C[H]$-module (see Lemma~\ref{lem:semisimple_vers_simple}).
Therefore, $\mathrm{Hom}_{\C[G]}(\ind^G_H W,\ind^G_H W)\cong \C$ if and only if 
$\mathrm{Hom}_{\C[H_s]} (\res_{H_s}^H W, W_s) =0$ for all  $s\in G-H$.
\end{proof}


\begin{corollary}
$G$ be a group,and let $N \unlhd G$ be a normal subgroup of finite index.
If
$\alpha\co N\to\SL(V)$ is irreducible, then $\ind^G_N (\alpha)$ is irreducible if and only if
$\alpha$ and $\alpha^s$ are non-equivalent for all $s\in G - N$.
\end{corollary}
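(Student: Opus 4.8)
The plan is to apply Mackey's irreducibility criterion (Lemma~\ref{lem:irred}) in the special case $H = N$ and to use the normality of $N$ to simplify every term, thereby reducing the disjointness condition to a statement about equivalence of irreducible representations via Schur's lemma.

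First I would record the effect of normality on the data entering Mackey's criterion. Since $N\unlhd G$ we have $sNs^{-1}=N$ for every $s\in G$, so the subgroups $N_s := sNs^{-1}\cap N$ appearing in Lemma~\ref{lem:irred} all coincide with $N$. Consequently the module $W_s := \res^{sNs^{-1}}_{N_s} W_{\alpha^s}$ is just the twisted module $W_{\alpha^s}$ regarded as a $\C[N]$-module, and likewise $\res^N_{N_s}W = W = W_\alpha$. With these identifications, Mackey's criterion asserts that $\ind^G_N\alpha$ is irreducible if and only if for every $s\in G-N$ the $\C[N]$-modules $W_{\alpha^s}$ and $W_\alpha$ are disjoint, i.e. $\mathrm{Hom}_{\C[N]}(W_\alpha, W_{\alpha^s}) = 0$.

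Next I would convert disjointness into non-equivalence. Because $\alpha$ is irreducible, each twist $\alpha^s$ is irreducible as well, as noted following the definition of the twisted representation. By Schur's lemma (Lemma~\ref{lem:semisimple_vers_simple}), for the two irreducible $\C[N]$-modules $W_\alpha$ and $W_{\alpha^s}$ the space $\mathrm{Hom}_{\C[N]}(W_\alpha, W_{\alpha^s})$ is either $0$ or one-dimensional, and it is nonzero precisely when the two modules are isomorphic, that is, when $\alpha$ and $\alpha^s$ are equivalent. Hence $W_{\alpha^s}$ and $W_\alpha$ are disjoint if and only if $\alpha$ and $\alpha^s$ are non-equivalent, and combining this with the previous paragraph yields exactly the claimed criterion.

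The argument is essentially a direct specialization of Lemma~\ref{lem:irred}, so there is no serious obstacle. The only point requiring a little care is the bookkeeping in the first step: verifying that normality collapses $N_s$ to $N$ and that $W_s$ then becomes precisely the twist $\alpha^s$ viewed as an $N$-representation, so that Mackey's criterion is being applied to the correct pair of $\C[N]$-modules.
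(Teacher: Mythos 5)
Your proposal is correct and follows exactly the paper's route: specialize Mackey's criterion (Lemma~\ref{lem:irred}) to $H=N$ normal, observe $N_s=N$ so the modules in question are $W_\alpha$ and $W_{\alpha^s}$, and use Schur's lemma to identify disjointness of irreducibles with non-equivalence. The paper's own proof is just a terser statement of the same three steps.
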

\begin{proof}
We apply Lemma~\ref{lem:irred} in the case $H=N$ is a normal subgroup. It follows that $N_s=N$ and $\alpha^s$ is the twisted representation. Notice that two irreducible representations are disjoint if and only if they are not equivalent. 
\end{proof}

\subsection{Example}
Let $K=\mathfrak{b}(\alpha,\beta)\subset S^3$ be a two-bridge knot.
A presentation of the knot group $\pi_{\alpha,\beta}$ is given by 
\[
\pi_{\alpha,\beta}=\langle s,t \mid l_s s = t l_s\rangle\,\quad\text{%
where $l_s = s^{\epsilon_1}t^{\epsilon_2}\cdots t^{\epsilon_{\alpha-1}}$, and
$\epsilon_k = (-1)^{[k\mathfrak{\beta/\alpha}]}$.}
\]
We consider the following representation of $\pi_{\alpha,\beta}$ into the symmetric group:
$\delta\co \pi_{\alpha,\beta}\to\mathcal{S}_\alpha$ given by:
\begin{equation}\label{eq:dihedral_rep}
\delta(s) = (1)(2,2n+1)(3,2n)\cdots(n+1,n+2), \text{  and } \delta(a) = (1,2,\ldots,\alpha)\,,
\end{equation}
where $\alpha = 2n+1$ and $a=ts^{-1}$.
The image of $\delta$ is a dihedral group. We adopt the convention that permutations act on the right on $\{1,\ldots,\alpha\}$, and hence $\pi_{\alpha,\beta}$ acts on the right. 
We put $N=\mathrm{Ker}(\delta)$ and $H = \mathrm{Stab}(1) = \{g\in G\mid 1^{\delta(g)} = 1\}$. We have $N\leq H$, $N\unlhd \pi_{\alpha,\beta}$,
$[\pi_{\alpha,\beta}:H] =\alpha$, and $[H:N]=2$. 

The irregular  covering of $E_K$ corresponding to $H$ has been studied since the beginning of knot theory.
K.~Reidemeister calculated a presentation of $H$. Moreover,  he showed that the total space of the corresponding irregular branched covering $(\widehat{S}^3,\widehat{K})\to(S^3,\mathfrak{b}(\alpha,\beta) )$ is simply connected. He proved also that the branching set $\widehat{K}$ consists of 
$(n+1)$ unknotted components (see \cite{Rei29,Rei74}). G.~Burde proved in \cite{Bur71} that $\widehat{S}^3$ is in fact the $3$-sphere and he determined the nature on the branching set explicitly in \cite{Bur88}. 
More recently,
G.~Walsh studied the regular branched covering corresponding to $N$ \cite{Wal05}. She proved that the corresponding branching set is a great circle link in $S^3$. 
 
Let us consider the figure-eight knot $\mathfrak{b}(5,3)$ and its group $\pi_{5,3}$:
\begin{equation}\label{eq:pres_fig8_group}
\pi_{5,3}\cong\langle s,t \mid s t^{-1} s^{-1} t s = t s t^{-1} s^{-1} t\rangle
\cong \langle s,a \mid a^{-1}s^{-1}a s a^{-1} s a s^{-1} a^{-1}  \rangle 
\end{equation}
where $a=t s^{-1}$. In this case we have that $\delta\co \pi_{5,3}\to\mathcal{S}_5$ is given by
\begin{equation}\label{eq:dihedral_4_1}
\delta(s) = (1)(2,5)(3,4)\quad\text{ and }\quad \delta(a) = (1,2,3,4,5)\,.
\end{equation} 
For the figure-eight knot, the link $\widehat{K}_{4_1}$ has a particular simple form (see Figure~\ref{fig} and \cite[Example~14.22]{BZH}).
\begin{figure}[htbp]
\def\svgwidth{90pt}
\hspace*{\fill}
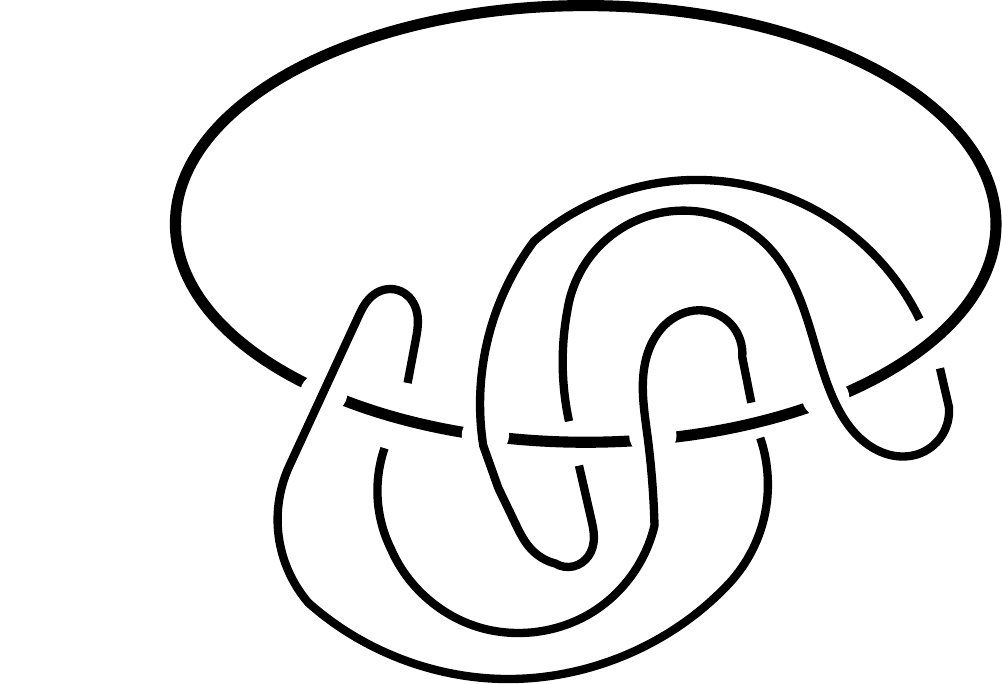 
\hspace*{\fill}
\caption{The link $\widehat{K}_{4_1}\subset \widehat{S}^3$. }
\label{fig}
\end{figure}
If we fill in the component $\hat{k}_0$, then $\widehat{K}_{4_1}$ transforms into the trivial link of two components. 
Therefore $H/\llangle y_0 \rrangle\cong F_2$ where  $\llangle y_0 \rrangle$ denotes the normal subgroup of $H$ generated by
the meridian $y_0$ of $\hat{k}_0$.

More precisely, we can use the Reidemeister--Schreier method \cite{ZVC} for finding a presentation for $H$: $\{1,a,a^2,a^3,a^4\}$ is a Schreier representative system for the
right cosets modulo $H$.
Hence generators of $H$ are 
\begin{equation}\label{eq:gen_of_H}
y_0 = s,\ y_i = a^isa^{i-5},\ i=1,2,3,4,\ \text{ and  } y_5 = a^5\,.
\end{equation}
We obtain defining relations $r_i$, $i=0,1,2,3,4$, for $H$ by expressing 
\[
r_i=a^i  (a^{-1}s^{-1}a s a^{-1} s a s^{-1} a^{-1} ) a^{-i}
\]
as a word in the $y_j$:
\[
r_0 = y_5^{-1} y_1^{-1} y_2^2 y_1^{-1},\
r_1 = y_0^{-1} y_1 y_3 y_2^{-1},\ 
r_2 = y_4^{-1} y_5 y_0 y_5^{-1} y_4 y_3^{-1},\
\]
\[
r_3 = y_3^{-1} y_4 y_0 y_4^{-1},\
r_4 = y_2^{-1} y_3 y_1 y_5 y_0^{-1} y_5^{-1}\,.
\] 
It follows that $H/\llangle y_0 \rrangle\cong\langle y_1,y_2,y_3,y_4,y_5\mid y_3, y_1=y_2, y_5\rangle
\cong F(y_1,y_4)$. Therefore, a surjection $\psi\co H\twoheadrightarrow F_2 = F(x,y)\cong $ is given by
\begin{equation}\label{eq:surj_H_to_F2}
\psi(y_0)=\psi(y_3)=\psi(y_5) = 1,\quad
\psi(y_1)=\psi(y_2)= x
\quad\text{ and }\quad
\psi(y_4) = y\,. 
\end{equation} 
We have $y_0\in H-N$, and $y_0\in\mathrm{Ker}(\psi)$.

We need also generators of $N$: we have $y_0\notin N$ hence Reidemeister-Schreier gives that $N$ is generated by
\begin{equation}\label{eq:gen_of_N}
 y_iy_0^{-1},\quad y_5, \quad y_0^2,\quad  y_0y_i,\quad y_0y_5y_0^{-1} 
 \quad\text{ where $i=1,2,3,4$.}
\end{equation}

\begin{lemma}\label{lem:alpha_is_irred}
Let $\beta\co F_2\to \SL(2m,\C)$ be irreducible. Then 
$\alpha = \beta\circ\psi\co H\to F_2\to \SL(2m,\C)$ and 
$\alpha|_N\co H\to F_2\to \SL(2m,\C)$ are also irreducible.
\end{lemma}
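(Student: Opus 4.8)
The plan is to exploit the elementary fact that whether a representation $\rho\co G\to\GL(W)$ is irreducible depends only on its image subgroup $\rho(G)\leq\GL(W)$: a subspace of $W$ is $\rho(G)$-stable precisely when it is stable under every element of $\rho(G)$, so $\rho$ and any other representation having the same image in $\GL(W)$ possess exactly the same invariant subspaces. It therefore suffices to identify the images of $\alpha$ and of $\alpha|_N$ in $\SL(2m,\C)$ and to compare them with the image of $\beta$.

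For $\alpha=\beta\circ\psi$ this is immediate. Since $\psi\co H\twoheadrightarrow F_2$ is surjective, we have $\alpha(H)=\beta(\psi(H))=\beta(F_2)$, and $\beta(F_2)$ acts irreducibly on $\C^{2m}$ by hypothesis; hence $\alpha$ is irreducible.

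For $\alpha|_N$ the only point to check is that $\psi$ stays surjective when restricted to $N$, i.e.\ that $\psi(N)=F_2$; granting this, $\alpha|_N$ again has image $\beta(F_2)$ and is therefore irreducible. To establish $\psi(N)=F_2$ I would argue as follows. Because $[H:N]=2$, the subgroup $N$ is normal in $H$ and $H=N\sqcup N y_0$ for the element $y_0\in H-N$ recorded above. Since $\psi(y_0)=1$ by \eqref{eq:surj_H_to_F2}, the coset $Ny_0$ has the same $\psi$-image as $N$, so $F_2=\psi(H)=\psi(N)\cup\psi(Ny_0)=\psi(N)$. Alternatively, and more concretely, one can read the claim off from the explicit generators of $N$ in \eqref{eq:gen_of_N}: applying $\psi$ to $y_1y_0^{-1}$ and to $y_4y_0^{-1}$ yields $x$ and $y$ respectively, so both free generators of $F_2$ already lie in $\psi(N)$.

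There is no serious obstacle here; the single thing to get right is the observation that $\psi(y_0)=1$ together with $[H:N]=2$ forces $\psi(N)=\psi(H)=F_2$, which is exactly what makes the passage to the index-two subgroup $N$ harmless. Everything else is the general principle that irreducibility is a property of the image subgroup in $\GL(W)$.
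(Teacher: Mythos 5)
Your proof is correct and follows essentially the same route as the paper: both reduce irreducibility to the observation that it depends only on the image subgroup, and both verify $\psi(N)=F_2$ using the explicit generators of $N$ from \eqref{eq:gen_of_N} together with \eqref{eq:surj_H_to_F2}. Your additional coset argument ($[H:N]=2$ and $\psi(y_0)=1$ forcing $\psi(N)=\psi(H)$) is a clean alternative to reading off the generators, but it is the same idea in substance.
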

\begin{proof}
If $\beta\co F_2\to \mathrm{GL}(m,\C)$ is irreducible, then
$\alpha = \beta\circ\psi$ is also irreducible since $\psi\co H \to F_2$ is surjective (the representations $\alpha$ and $\beta$ have the same image). Similarly, \eqref{eq:gen_of_N} and \eqref{eq:surj_H_to_F2} give that
 $\psi|_N\co N \to F_2$ is are also surjective ($\beta$ and $\alpha|_N$ have the same image).
\end{proof}
We obtain a component of  representations $X_0\subset X(H,\sl(2n,\C))$ with
$\dim X_0 \geq 4n^2-1$, and $X_0$ contains irreducible representations.
In order to apply Lemma~\ref{lem:irred} we notice  that $\{1,a,a^2\}$ is a representative system for the $(H,H)$-double cosets 
\[
\pi_{5,3} = H \sqcup H\, a\, H \sqcup H\, a^2 \, H\,.
\]
More precisely, we have $HaH = Ha\sqcup Ha^4$, and $Ha^2H = Ha^2\sqcup Ha^3$.
We have also 
\[
H_a = H\cap a H a^{-1}= N =  H_{a^2} = H\cap a^2 H a^{-2}
\] 
since an element in
the image of the dihedral representation $\delta\co \pi_{5,3} \to \mathcal{S}_5$, given by \eqref{eq:dihedral_4_1}, 
which fixes two numbers is the identity.

For the rest of this section we let $G:=\pi_{5,3}$ denote the group of the figure-eight knot.

\begin{lemma}\label{lem:ind_alpha_is_irred}
Let $\beta\co F(x,y)\to \GL(m,\C)$ be given given by $\beta(x)=A$ and $\beta(y)=B$.
If $\beta$ is irreducible,  then 
$\rho=\ind^G_H(\beta\circ\psi)$ is irreducible.
\end{lemma}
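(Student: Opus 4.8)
The plan is to invoke Mackey's criterion, Lemma~\ref{lem:irred}, for the finite-index subgroup $H\le G$ and the representation $\alpha=\beta\circ\psi$, which is irreducible by Lemma~\ref{lem:alpha_is_irred}. Using the double-coset decomposition $G=H\sqcup HaH\sqcup Ha^2H$ recorded above, together with the identities $H_a=H\cap aHa^{-1}=N=H\cap a^2Ha^{-2}=H_{a^2}$, the criterion reduces to the following assertion: for each $s\in\{a,a^2\}$ the $\C[N]$-modules $\res^H_{N}W=\alpha|_N$ and $W_s=\res^{sHs^{-1}}_{N}W_{\alpha^s}$ are disjoint. Since $N\unlhd G$, the module $W_s$ is exactly the twist $(\alpha|_N)^s$, $g\mapsto\alpha(s^{-1}gs)$; as a twist of the irreducible module $\alpha|_N$ it is again irreducible. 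Two irreducible $\C[N]$-modules are disjoint if and only if they are non-isomorphic, so it is enough to exhibit, for each $s$, one element of $N$ on which $\alpha|_N$ and $(\alpha|_N)^s$ take values that cannot be conjugate.

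First I would take the element $v=y_0^2$. It lies in $N$, because $\delta(y_0)=\delta(s)$ has order two, and by \eqref{eq:surj_H_to_F2} we have $\psi(v)=1$, so that $\alpha|_N(v)=\beta(1)=I$ independently of $\beta$. The crucial computation is then to rewrite the conjugates $a^{-1}y_0^2a$ and $a^{-2}y_0^2a^2$ as words in the Schreier generators $y_0,\dots,y_5$ of $H$. Using $y_i=a^isa^{i-5}$ and $y_5=a^5$, I expect the Reidemeister--Schreier rewriting to give
\[
a^{-1}y_0^2a=y_5^{-1}y_4y_1y_5\qquad\text{and}\qquad a^{-2}y_0^2a^2=y_5^{-1}y_3y_2y_5 .
\]
Applying $\psi$ as in \eqref{eq:surj_H_to_F2} then yields $\psi(a^{-1}y_0^2a)=yx$ and $\psi(a^{-2}y_0^2a^2)=x$, and hence
\[
(\alpha|_N)^a(v)=\beta(y)\beta(x)=BA,\qquad (\alpha|_N)^{a^2}(v)=\beta(x)=A,
\]
while $\alpha|_N(v)=I$.

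To conclude, I would argue by contradiction: an isomorphism $\alpha|_N\cong(\alpha|_N)^s$ would carry $\alpha|_N(v)=I$ to $(\alpha|_N)^s(v)$, forcing $BA=I$ when $s=a$ and $A=I$ when $s=a^2$. But $\beta$ is irreducible of dimension $m\ge2$, so its image $\beta(F_2)$ is non-abelian; in particular neither $\beta(x)=A$ nor $\beta(yx)=BA$ can equal $I$, since either equality would make $\beta(F_2)$ cyclic, hence abelian. Thus $\alpha|_N$ and $(\alpha|_N)^s$ are non-isomorphic, therefore disjoint, for both $s=a$ and $s=a^2$, and Mackey's criterion shows that $\rho=\ind^G_H(\beta\circ\psi)$ is irreducible.

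The step I expect to be the main obstacle is the explicit rewriting of $a^{-1}y_0^2a$ and $a^{-2}y_0^2a^2$ in terms of the $y_j$: because $a\notin H$ one cannot conjugate the generators termwise inside $H$, and one must instead track cosets through the dihedral permutation representation $\delta$ of \eqref{eq:dihedral_4_1} (equivalently, run the Reidemeister--Schreier process on the words $a^{-1}s^2a$ and $a^{-2}s^2a^2$). Once these two identities are established, the representation-theoretic conclusion is immediate.
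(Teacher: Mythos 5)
Your proposal is correct and follows essentially the same route as the paper's proof: Mackey's criterion with the double cosets $\{1,a,a^2\}$ and $H_a=H_{a^2}=N$ reduces the claim to $\alpha|_N\not\sim(\alpha|_N)^a$ and $\alpha|_N\not\sim(\alpha|_N)^{a^2}$, and you detect this on the element $y_0^2\in N$ exactly as the paper does, via the same rewritings $a^{-1}y_0^2a=y_5^{-1}y_4y_1y_5$ and $a^{-2}y_0^2a^2=y_5^{-1}y_3y_2y_5$, yielding the values $I$, $BA$ and $A$. Your explicit justification that $A\neq I$ and $BA\neq I$ (either equality would make the image cyclic, hence reducible for $m\geq 2$) fills in a small detail the paper leaves implicit.
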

\begin{proof}
We let $\alpha=\beta\circ\psi$ denote the corresponding representation of $H$.
By Lemma~\ref{lem:alpha_is_irred} we obtain that $\alpha|_N$, $(\alpha|_N)^a$ and
$(\alpha|_N)^{a^2}$ are irreducible. 
Hence by Lemma~\ref{lem:irred} we obtain that $\rho=\ind^G_H\alpha$ is irreducible if and only if
\[
\alpha|_N\not\sim(\alpha|_N)^a
\quad\text{ and }\quad
\alpha|_N\not\sim(\alpha|_N)^{a^2}\,.
\]
The element $y_0^2\in N$ and $\psi(y_0)=1$ implies $\alpha|_N(y_0^2) = I_m$.
Let $\beta(x) = A$ and $\beta(y) = B$ where $A,B\in\mathrm{GL}(m,\C)$.
We have
\begin{align*}
a^{-1} y_0^2 a &= a^{-1} s^2 a = a^{-5} \cdot a^4 s a^{-1} \cdot a s a^{-4}\cdot a^5 = y_5^{-1} y_4 y_1 y_5\\
\intertext{and}
a^{-2} y_0^2 a^2 &= a^{-2} s^2 a^2 = a^{-5} \cdot a^3 s a^{-2} \cdot a^2 s a^{-3}\cdot a^5 = 
y_5^{-1} y_3 y_2 y_5. 
\end{align*}
Now, $\alpha$ is given by
\[
\alpha(y_0) = \alpha(y_3)=\alpha(y_5)= I_m,\ \alpha(y_1) =\alpha(y_2) =A,\quad 
\text{ and $\alpha(y_4) = B$.}
\]
Therefore, $(\alpha|_N)^a(y_0^2) = BA$ and $(\alpha|_N)^{a^2}(y_0^2) = A$. 
Now, if $\beta$ is irreducible, then $A\neq I_n$, and $AB\neq I_n$.
Hence $\alpha|_N\not\sim(\alpha|_N)^a$ and $\alpha|_N\not\sim(\alpha|_N)^{a^2}$.
\end{proof}

\begin{proof}[Proof of Corollary~\ref{cor:figure-8}]
The subgroup $H\leq \pi_{5,3}$ is of index $5$, and $\psi\co H\to F_2$ is a surjective homomorphism onto a free group of rank $2$. 
Now, by Lemma~\ref{lem:alpha_is_irred} and Lemma~\ref{lem:ind_alpha_is_irred}, and the same argument as in the proof of Theorem~\ref{mainthm2}, we obtain that for all $m\in\N$ the character variety
$X(\pi_{5,3},\sl(10m,\C))$ has a component $C$ of dimension at least $4m^2-1$.
Finitely, Lemma~\ref{lem:ind_alpha_is_irred} implies that $C$ contains characters of irreducible representations.
\end{proof}

More explicitly, if $\beta\co F(x,y)\to \mathrm{GL}(m,\C)$ is a representation 
given by $\beta(x)=A$ and  $\beta(y)=B$ then, following the construction given in Section~\ref{sec:induced},
the induced representation
$\rho = \mathrm{ind}_H^{G} (\beta\circ\psi)\co \pi_{5,3}\to\GL(5m,\C)$ is given by
\[ 
\rho(s) =
\begin{pmatrix} 
I_m & 0 & 0 & 0 & 0 \\
0 & 0 & 0 & 0 & B \\ 
0 & 0 & 0 & I_m & 0 \\
0 & 0 & A & 0 & 0 \\
0 & A & 0 & 0 & 0
\end{pmatrix}\quad\text{ and }\quad
\rho(t) =
\begin{pmatrix} 
0 & A & 0 & 0 & 0 \\
I_m & 0 & 0 & 0 & 0 \\ 
0 & 0 & 0 & 0 & B \\
0 & 0 & 0 & I_m & 0 \\
0 & 0 & A & 0 & 0
\end{pmatrix}\,.
\]
Here, $s$ and $t$ are the generators of $\pi_{5,3}$ from \eqref{eq:pres_fig8_group}, and $I_m\in\mathrm{GL}(m,\C)$ is the identity matrix.

\subsection*{Acknowledgments.}
The authors where supported by the SFB 1085 `Higher Invariants' at the University of Regensburg, funded by the Deutsche Forschungsgemeinschaft (DFG), and by the Laboratoire de Mathématiques UMR~6620  of the University Blaise Pascal and CNRS.

\bibliographystyle{alpha}
\bibliography{ind-rep-12102016}
\end{document}